\theoremstyle{plain}
\newtheorem{teo}{Theorem}[section]
\newtheorem*{teo*}{Theorem}
\newtheorem{co}[teo]{Corollary}
\newtheorem{lemma}[teo]{Lemma}
\newtheorem{prop}[teo]{Proposition}
\newtheorem*{sublema*}{Sublemma}
\theoremstyle{definition}
\renewcommand{\b}{\beta}
\theoremstyle{remark}
\newtheorem*{remarkthep}{Remarks about the proof of Theorems \ref{recthm1} and \ref{legerperm}}
\newtheorem{remark1}{Remark}
\newcommand{\R}{{\mathbb R}} 
\newcommand{\Ha}{{\mathcal H}^{\alpha}}
\newcommand{\ha}{{\mathcal H}}
\newcommand{\Rn}{{\mathbb R}^d}
\newcommand{\Rd}{{\mathbb R}^2}
\newcommand{\N}{\mathbb{N}}
\newcommand{\Z}{\mathbb{Z}}
\newcommand{\C}{\mathbb{C}}
\newcommand{\cc}{{\mathcal C}}
\renewcommand{\a}{\alpha}
\newcommand{\ra}{\rightarrow}
\newcommand{\CC}{{\mathcal C}}
\newcommand{\diam}{{\rm diam}}
\newcommand{\mang}{{\measuredangle}}
\newcommand{\pia}{{p^i_{\alpha,n}}}
\renewcommand{\l}{{\lambda}}
\newcommand{\stm}{\setminus}
\newcommand{\ga}{\gamma_{\alpha}^n}
\newcommand{\al}{\alpha}
\title[CZ kernels, rectifiability and Wolff capacities]{Some Calder\'on-Zygmund kernels and their relations to Wolff capacities and rectifiability}
\author{Vasilis Chousionis}
\address{Vasilis Chousionis. Department of Mathematics and Statistics,
P.O. Box 68,  FI-00014 University of Helsinki, Finland}
\email{vasileios.chousionis@helsinki.fi}
\thanks{V.C. is funded by the Academy of Finland Grant SA 267047. Also, partially supported by the ERC Advanced Grant 320501, while visiting Universitat Aut\`onoma de Bar\-ce\-lo\-na}
\author{Laura Prat}
\address{Laura Prat.  Departament de
Ma\-te\-m\`a\-ti\-ques, Universitat Aut\`onoma de Bar\-ce\-lo\-na, Catalonia}
\email{laurapb@mat.uab.cat}
\thanks{L.P is supported by grants 2009SGR-000420 (Generalitat de Catalunya) and MTM2010-15657 (Spain).}
\begin{document}

\begin{abstract}
We consider the Calder\'on-Zygmund kernels $K_ {\a,n}(x)=(x_i^{2n-1}/|x|^{2n-1+\a})_{i=1}^d$ in $\Rn$ for $0<\al\leq 1$ and $n\in\N$.
We show that, on the plane, for $0<\al<1$, the capacity associated to the kernels $K_{\a,n}$ is comparable to the Riesz capacity 
$C_{\frac23(2-\al),\frac 3 2}$ of non-linear potential theory.  As consequences we deduce the semiadditivity and bilipschitz invariance of this capacity.
Furthermore we show that for any Borel set 
$E\subset\Rn$ with finite length the $L^2(\mathcal{H}^1 \lfloor E)$-boundedness of the singular integral associated to $K_{1,n}$
implies the rectifiability of the set $E$. We thus extend to any ambient dimension, results previously known only in the plane.
\end{abstract}
\maketitle

\section{Introduction and statement of the results}
In this paper we continue the program started in \cite{cmpt} and \cite{cmpt2} where an extensive study of the kernels $x_i^{2n-1}/|x|^{2n}, n \in \N,$ was performed in the plane. We explore the kernels  $K_{\a,n}(x)=(K_{\a,n}^i(x))_{i=1}^d$ in $\Rn$, where
$$K_{\a,n}^i(x)= \frac{x_i^{2n-1}}{|x|^{2n-1+\al}},$$ 
for $0<\a\leq 1$, $n\in\N$, in connection to rectifiability and their corresponding capacities.

For compact sets $E\subset\Rd$, we define 
\begin{equation}\label{capalfa}
\ga(E)=\sup|\langle T,1\rangle|,
\end{equation}
the supremum taken over those real distributions $T$ supported on $E$ such
that for $i=1,2$, the potentials $K_{\a,n}^i*T$
are in the unit ball of $L^\infty(\Rd)$. For $n=1$ and $\al=1$ the capacity $\gamma_1^1$ coincides with the analytic capacity, modulo multiplicative constants 
(see \cite{semiad}) and it is worth mentioning that for $\al=1$ and $n\in\N$ it was proved in \cite{cmpt2} that $\gamma_1^n$ is comparable to analytic capacity. 
Recall that the analytic capacity of a compact subset of the plane is defined by $$\gamma(E)=\sup|f'(\infty)|,$$the supremum taken over the analytic functions on 
$\C\setminus E$ such that $|f(z)|\le 1$ for $z\in\C\setminus E$. Analytic capacity may be written as \eqref{capalfa} interchanging the real by complex distributions 
and the vectorial kernel $K_{\a,n}$ 
by the Cauchy kernel. Therefore, our set function $\ga$ can be viewed as a real variable version of analytic capacity associated to the vector-valued kernel $K_{\a,n}$.

There are several papers where similar capacities have been studied;
in $\Rn$, for $0<\al<1$, it was discovered in \cite{imrn} that compact sets with finite $\al-$dimensional Hausdorff measure have zero $\gamma_\al^1$ capacity 
(for the case of non-integer $\al>1$ one has to assume some extra regularity assumptions on the set, see \cite{imrn} and \cite{illinois}). 
This is in strong contrast with the situation where $\al\in\Z$ (in this case $\al-$dimensional smooth hypersurfaces have positive $\gamma_\al^1$ capacity, see \cite{mp}, 
where they showed that if $E$ lies on a Lipschitz graph, then $\gamma_{d-1}^1(E)$ is comparable to the $(d-1)-$Hausdorff measure ${\mathcal H}^{d-1}(E)$). In \cite{tams} 
the semiadditivity of the $\gamma_\a^1$ was proven for $0<\a<d$ in $\Rn$. 

For $s>0$, $1 < p < \infty$ and $0 < sp\le 2$, the Riesz capacity $C_{s,p}$ of a compact set $K\subset\Rd$, is defined as

\begin{equation}\label{wolffcap}
C_{s,p}(K)=\sup_{\mu}\mu(K)^p,
\end{equation}
where the supremum runs over all positive measures $\mu$ supported on $K$ such that 

$$I_s(\mu)(x)=\int\frac{d\mu(x)}{|x-y|^{2-s}}$$
satisfies $\|I_s(\mu)\|_q\le 1$,  where as usual $q= p/(p-1)$. The capacity $C_{s,p}$ plays a central role in understanding the nature of Sobolev spaces 
(see \cite{adamshedberg} Chapter 1, p. 38).

In \cite{mpv} it was surprisingly shown that in $\Rn$ for $0<\a<1$, 
the capacities $\gamma_\a^1$ and $C_{\frac 2 3(d-\al),\frac 3 2}$ are comparable. 
In this paper we extend the main result from \cite{mpv} on the plane by establishing the equivalence between $\ga$, $0<\al<1$, $n\in\N$ and the capacity $C_{\frac 2 3(2-\al),\frac 3 2}$ of non-linear potential theory.  
Our first main result reads as follows:

\begin{teo}\label{main}
For each compact set $K\subset\Rd$, $0 <\al< 1$ and $n\in\mathbb{N}$ we have
\begin{equation*}
c^{-1}\ C_{\frac 2 3(2-\al),\frac 3 2}(E)\le\ga(E)\le c\ C_{\frac 2 3(2-\al),\frac 3 2}(E).
\end{equation*}
where $c$ is a positive constant depending only on $\al$ and $n$.
\end{teo}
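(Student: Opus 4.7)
The approach follows the roadmap of \cite{mpv}, which handled the case $n=1$ in arbitrary ambient dimension, and rests on the Wolff-energy characterisation of the Riesz capacity. With $s=\tfrac 2 3(2-\al)$, $p=\tfrac 3 2$, $p'=3$ one has the equivalence
$$C_{s,p}(E)\asymp\sup\{\mu(E):\supp\mu\subset E,\ \WW(\mu)\le\mu(E)\},$$
where $\WW(\mu)=\int\!\!\int_0^\infty\bigl(\mu(B(x,r))/r^\al\bigr)^2\frac{dr}{r}\,d\mu(x)$ is the associated Wolff energy. The bridge between $\WW(\mu)$ and $\ga$ is a symmetrisation inequality generalising Melnikov's curvature formula. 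Concretely, the first task is to establish non-negativity of the permutation sum
$$p_{\al,n}(x,y,z)=\sum_\sigma K_{\al,n}(x_{\sigma(1)}-x_{\sigma(2)})\cdot K_{\al,n}(x_{\sigma(1)}-x_{\sigma(3)}),$$
and the pointwise lower bound $p_{\al,n}(x,y,z)\gtrsim L(x,y,z)^{-2\al}$, where $L(x,y,z)$ is the longest side of the triangle. For $n=1$ this is done in \cite{mpv}; for arbitrary $n$ one expands the cyclic sum $\sum_{\rm cyc}K_{\al,n}^i(x-y)$ along the longest edge and extracts the decay in $L$ from the resulting cancellations.

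\textbf{Lower bound $C_{s,p}(E)\lesssim\ga(E)$.} Take a near-extremal $\mu$ for the Wolff formulation of $C_{s,p}(E)$, so that $\WW(\mu)\le\mu(E)\asymp C_{s,p}(E)$. A regularisation-plus-Frostman argument reduces matters to the case where $\mu$ has $\al$-growth at all scales, which is consistent with the finiteness of its Wolff energy. Using this growth together with the energy bound and the pointwise symmetrisation, one verifies that each potential $K_{\al,n}^i*\mu$ is bounded uniformly. After normalising by the resulting $L^\infty$ constant, $\mu$ is admissible for $\ga$, giving $\ga(E)\gtrsim\mu(E)\asymp C_{s,p}(E)$.

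\textbf{Upper bound $\ga(E)\lesssim C_{s,p}(E)$.} Conversely, take a distribution $T$ on $E$ nearly attaining $\ga(E)$. A $T(b)$-type theorem adapted to the vector kernel $K_{\al,n}$ (in the spirit of Tolsa's work on analytic capacity) should extract a positive measure $\mu$ on $E$ of mass $\asymp\ga(E)$ whose truncated singular integral operators are uniformly bounded on $L^2(\mu)$. Integrating $p_{\al,n}\gtrsim L^{-2\al}$ against $d\mu(x)d\mu(y)d\mu(z)$ and re-writing the resulting triple integral in polar coordinates around one vertex yields $\WW(\mu)\lesssim\mu(E)$. Hence $\mu$ is admissible for the Wolff formulation of $C_{s,p}$ and $C_{s,p}(E)\gtrsim\mu(E)\asymp\ga(E)$.

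\textbf{Main obstacle.} The principal difficulty is the symmetrisation estimate for $n\ge 2$: producing the pointwise bound $p_{\al,n}(x,y,z)\gtrsim L(x,y,z)^{-2\al}$ uniformly in the triangle's geometry. The higher power $x_i^{2n-1}$ inflates the oscillation of each $K_{\al,n}^i$ and the cancellations among the three cyclic contributions are considerably more delicate than in the case $n=1$; one must identify the leading non-cancelling term in the expansion along the longest edge and verify that it dominates in all triangle configurations. Once this symmetrisation is in place, the remaining pieces — the $L^\infty$ potential estimate for the lower bound and the $T(b)$-to-Wolff-energy passage for the upper bound — follow the blueprint established in \cite{mpv}.
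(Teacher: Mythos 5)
Your high-level roadmap is essentially the paper's: pass through the Wolff-energy characterisation of $C_{\frac 23(2-\al),\frac 32}$, use Tolsa's $T(b)$-type machinery for the hard inequality, and reduce everything to a symmetrisation estimate $p_{\al,n}(x,y,z)\gtrsim L(x,y,z)^{-2\al}$, which you correctly flag as the central new point for $n\ge 2$ (this is Proposition~\ref{lemmaperm} and Corollary~\ref{permalfa} in the paper). However, you omit a second, equally substantial technical ingredient that the paper stresses from the outset: a \emph{localization lemma} in the uniform norm for the potentials $K_{\al,n}^i*T$ (Lemma~\ref{localization}), i.e.\ an estimate of the form $\|K_{\al,n}^i*\varphi_Q T\|_\infty\lesssim\|K_{\al,n}^i*T\|_\infty+G_\al(T)$. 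Without such a localization, Tolsa's semiadditivity scheme cannot be run on $\ga$, and your phrase ``a $T(b)$-type theorem adapted to the vector kernel $K_{\al,n}$ \emph{should} extract a positive measure'' is precisely where the argument is unsupported. Establishing the localization in turn requires identifying the right $\al$-growth condition for distributions with bounded $K_{\al,n}^i$-potentials, and the paper obtains this from a new reproduction formula $f=\varphi_1*K_{\al,n}^1+\varphi_2*K_{\al,n}^2$ (Lemma~\ref{reproductionformula}). Its proof rests on the explicit Fourier transform of $K_{\al,n}^i$ (Lemma~\ref{fourier}), which depends on a nontrivial combinatorial identity (Lemma~\ref{coeff}) guaranteeing that a certain degree-$(2n-2)$ homogeneous polynomial has no real zeros off the origin, together with the invertibility theory of smooth homogeneous Calder\'on--Zygmund operators. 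This whole apparatus occupies Section~\ref{secloc} of the paper and carries roughly as much weight as the symmetrisation; it is also what confines the theorem to the plane, a restriction your sketch does not account for.

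A smaller but genuine slip occurs in your lower bound: from $\mathcal W(\mu)\lesssim\mu(E)$ and $\al$-growth you claim to ``verify that each potential $K_{\al,n}^i*\mu$ is bounded uniformly.'' That is not what the energy bound gives you. What one obtains, via the positivity of $p_{\al,n}$ and a Melnikov--Verdera type argument, is uniform $L^2(\mu)$-boundedness of the truncated operators; passing from there to an admissible object for $\gamma_{\a,+}^n$ (a measure or distribution whose potentials lie in $L^\infty$) requires a dualization in the spirit of Davie--\O ksendal and of \cite[Lemma~4.1]{mpv}, which the paper invokes after adapting it with the reproduction formula and the growth estimate \eqref{ourgrowth}. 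It is not a pointwise $L^\infty$ bound for the potentials of the same $\mu$. Once both the symmetrisation and the localization are in place, the remaining chain $\ga\approx\gamma_{\a,+}^n\approx\sup_\mu 1/\mathcal E_{\a,n}(\mu)\approx\sup_\mu 1/\mathcal E_{\a,1}(\mu)\approx C_{\frac 23(2-\al),\frac 32}$ does follow as you indicate, with the middle comparability coming from Corollary~\ref{permalfa} and the last from Wolff's inequality and \cite[Lemma~4.2]{mpv}.
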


On the plane and for $\a\in (1,2)$ the equivalence of the above capacities is not known. In \cite{env} it was shown that, in $\Rn$, for $0<\a<d$ and $n=1$, 
the first inequality in Theorem \ref{main} holds (replacing $C_{\frac 2 3(2-\al),\frac 3 2}$ by $C_{\frac 2 3(d-\al),\frac 3 2}$). The question concerning the validity
of the inequality $\ga(E)\lesssim C_{\frac 2 3(d-\al),\frac 3 2}(E)$ for all non integer $\a\in (0,d)$ and $n\in\N$ remains open.\newline

Theorem \ref{main} has some interesting consequences. As it is well known, sets with positive capacity $C_{s,p}$ have non finite Hausdorff 
measure ${\mathcal H}^{2-sp}$. Therefore, the same applies to $\ga$, for $0<\a<1$ and $n\in\N$. 
Hence as a direct corollary of Theorem \ref{main} one can assert that $\ga$ vanishes on sets with finite $\Ha$ measure.  
On the other hand, since $C_{s,p}$ is a subadditive set function (see \cite{adamshedberg}, p. 26), $\ga$ is semiadditive, which means that given compact sets $E_1$ and $E_2$
$$\ga(E_1\cup E_2)\le C(\ga(E_1)+\ga(E_2)),$$for some constant $C$ depending on $\al$ and $n$. In fact $\ga$ is countably semiadditive.

Another consequence of Theorem \ref{main} is the bilipschitz invariance of $\ga$, meaning that for bilipschitz homeomorphisms of $\Rd$, $\phi:\Rd\to\Rd$, 
namely $$L^{-1}|x-y|\le|\phi(x)-\phi(y)|\le L|x-y|\,\;\;\;x,\;y\in\Rd$$ one has $$\ga(E)\approx\ga(\phi(E)).$$
The fact that analytic capacity is bilipschitz invariant was a very deep result in \cite{bilipschitz}, see also \cite{gv} and \cite{gpt}.

All advances concerning analytic capacity in the last 40 years, \cite{mmv}, \cite{david}, \cite{semiad}, \cite{bilipschitz}, go through the deep geometric study of the Cauchy transform which was initiated by Calderon in \cite{ca}. In particular it was of great significance to understand what type of geometric regularity does the $L^2(\mu)$-boundedness of the Cauchy transform impose on the underlying measure $\mu$. From the results of David, Jones, Semmes and others, soon it became clear that rectifiability plays an important role in the understanding of the aforementioned problem.  Recall that $n$-rectifiable  sets in $\Rn$ are contained, up to an $\ha^n$-negligible set, in a countable union  of $n$-dimensional Lipschitz graphs.  Mattila, Melnikov and Verdera in \cite{MMV} proved that whenever $E$ is an $1$-Ahlfors-David regular set that the $L^2(\ha^1 \lfloor E)$-boundedness of the Cauchy transform is equivalent to $E$ being $1$-uniformly rectifiable. A set $E$ is called $1$-Ahlfors-David regular, or $1$-AD-regular, if there exists some constant $c$ such that
$$c^{-1}r\leq \ha^1(B(x,r) \cap E) \leq c\,r\quad\mbox{for all} x\in E,
0<r\leq\diam(E).$$
Uniform rectifiability is a influential notion of quantitative rectifiability introduced by David and Semmes, \cite{DS1} and \cite{DS2}. In particular a set $E$ is $1$-uniformly rectifiable if it $1$-AD regular and is containted in an $1$-AD regular rectifiable curve. 
Leg\'er in \cite{Leger} proved that if $E$ has positive and finite length and the Cauchy transform is bounded in $L^2(\ha^1 \lfloor E)$ then $E$ is rectifiable. It is a remarkable fact that the proofs of the aforementioned results depend crucially on a special subtle positivity property of the Cauchy kernel related to an old notion of curvature named after Menger. Given three distinct points $z_1,z_2,z_3 \in \C$ their Menger curvature is 
\begin{equation}
\label{meng}
c(z_1,z_2,z_3)=\frac{1}{R(z_1,z_2,z_3)},
\end{equation} 
where $R(z_1,z_2,z_3)$ is the radius of the circle passing through $x,y$ and $z$. Melnikov in \cite{Me} discovered that the Menger curvature is related to the Cauchy kernel  by the formula 
\begin{equation}
\label{mel}
c(z_1,z_2,z_3)^2= \sum_{s \in S_3} \frac{1}{(z_{s_2}-z_{s_1})\overline{(z_{s_3}-z_{s_1})}},
\end{equation}
where $S_3$ is the group of permutations of three elements. It follows immediately that the permutations of the Cauchy kernel are always positive. Further implications of this identity related to the $L^2$-boundedness of the Cauchy transform where illuminated by Melnikov and Verdera in \cite{MeV}. 

While the Cauchy transform is pretty well understood in this context, very few things are know for other kernels. The David-Semmes conjecture, dating from 1991, asks if the $L^2(\mu)$-boundedness of the operators associated with the $n$-dimensional Riesz kernel $x/|x|^{n+1}$, suffices to imply $n$-uniform rectifiabilty. For $n=1$ we are in the case of the Cauchy transform discussed in the previous paragraph. The conjecture has been very recently resolved by Nazarov, Tolsa and Volberg in \cite{NToV} in the codimension 1 case, that is for  $n=d-1$, using a mix of different deep techniques some of them depending crucially on $n$ being $d-1$. The conjecture is open for the intermediate values $n \in(1,d-1)$  
Recently in \cite{cmpt} the kernels $x_i^{2n-1} / |x|^{2n}, \, x\in \Rd, n \in \N,$ were considered and it was proved that the $L^2$-boundedness of the operators associated with any of these kernels implies rectifiability. These are the only known examples of convolution kernels not directly related to the Riesz kernels with this property. In this paper we extend this result to any ambient dimension $d$. 

For $n \in \N$ and $E \subset \Rn$ with finite length we consider the singular integral operator $T_n=(T^i_n)_{i=1}^d$ where formally
\begin{equation}
\label{operator}
T^i_{n}(f)(x)= \int_E K^i_{1,n}(x-y)f(y) d \ha^1(y)
\end{equation} 
and
\begin{equation*}
K^i_{1,n}(x)=\frac{x_i^{2n-1}}{|x|^{2n}}, \quad x =(x_1,\dots,x_d) \in \Rn \stm \{0\}.
\end{equation*}
We extend Theorem 1.2 and Theorem 1.3 from \cite{cmpt} to any dimension $d$. Our result reads as follows.
\newpage
\begin{teo}
\label{recthm1}
Let $E \subset \Rn$ be a Borel set such that $0<\mathcal{H}^1(E)<\infty$, 
\begin{enumerate}
\item if $T_n$ is bounded in $L^2(\mathcal{H}^1 \lfloor E)$, then the set $E$ is rectifiable.
\item if moreover $E$ is 1-$AD$-regular then  $T_n$ is bounded in $L^2(\mathcal{H}^1 \lfloor E)$ if and only if $E$ is 1-uniformly rectifiable.
\end{enumerate}
\end{teo}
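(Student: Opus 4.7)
The plan is to extend to arbitrary ambient dimension $d$ the curvature-based scheme used in \cite{cmpt} to prove the corresponding planar results. Everything hinges on one algebraic key lemma: a higher-dimensional permutation-positivity inequality for the vector kernel $K_{1,n}$ bounding the associated symmetric form below by squared Menger curvature. Once this is secured, part (1) follows from the now-standard Melnikov-Verdera symmetrization plus L\'eger's rectifiability theorem (which is valid in any $\Rn$), and part (2) follows by combining this with David-Semmes $L^2$-theory and the curvature characterization of $1$-uniform rectifiability on $1$-AD-regular sets.

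The key lemma. For three distinct points $x,y,z\in\Rn$, writing $(x_1,x_2,x_3)=(x,y,z)$, set
\begin{equation*}
p_n(x,y,z) \;:=\; \sum_{\sigma\in S_3}\, K_{1,n}(x_{\sigma(1)}-x_{\sigma(2)})\cdot K_{1,n}(x_{\sigma(1)}-x_{\sigma(3)}),
\end{equation*}
with $\cdot$ the Euclidean inner product in $\Rn$. I would prove $p_n(x,y,z)\geq 0$ always and, more importantly,
\begin{equation*}
p_n(x,y,z) \;\gtrsim\; c(x,y,z)^2,
\end{equation*}
with implicit constant depending only on $n,d$, where $c$ is the Menger curvature \eqref{meng}. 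The intended route is to express the factors $x_i^{2n-1}$ of each difference vector using direction cosines of the coordinate axes $e_i$ relative to the $2$-plane $V$ spanned by $\{x,y,z\}$; the ``diagonal'' part of the resulting expansion recovers, for each pair $(e_i,e_j)$, an instance of the planar permutation form already known to be non-negative and comparable to $c^2$ from \cite{cmpt}, while the cross-terms must be controlled by a direct algebraic identity combined with a compactness argument on the Grassmannian of $2$-planes in $\Rn$. The main obstacle is precisely that for $n\geq 2$ the kernel $K_{1,n}$ is not invariant under rotations of $\Rn$, so one cannot normalize $V$ to a fixed coordinate plane and invoke \cite{cmpt} directly; proving uniform positivity in the position of $V$ relative to the standard frame is the most delicate step of the whole paper.

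Part (1), granting the key lemma. By the antisymmetry of $K_{1,n}$ and the standard Melnikov-Verdera symmetrization (see \cite{MMV}), for $\mu=\ha^1\lfloor E$ and each $\ep>0$ one obtains an identity of the schematic form
\begin{equation*}
\sum_{i=1}^d \bigl\|T_{n,\ep}^i\bfu\bigr\|_{L^2(\mu)}^2 \;\approx\; \iiint p_n(x,y,z)\,d\mu(x)\,d\mu(y)\,d\mu(z),
\end{equation*}
where the triple integral is restricted to triples whose three pairwise distances exceed $\ep$. The $L^2(\mu)$-boundedness of $T_n$ controls the left-hand side uniformly in $\ep$, and the key lemma then yields a uniform bound on the localized Menger curvature of $\mu$. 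Combining this with $\ha^1(E)<\infty$ and the positivity of the upper density at $\mu$-a.e.\ point, a standard good-set extraction as in \cite{MMV} and \cite{cmpt} produces subsets of positive $\mu$-measure meeting the hypotheses of L\'eger's theorem \cite{Leger} (whose proof works in every ambient dimension for $1$-sets), and iterating this gives the rectifiability of $E$.

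Part (2). The direction ($1$-UR $\Rightarrow L^2$-boundedness of $T_n$) is immediate from the general David-Semmes $L^2$-theory: $K_{1,n}$ is odd, $C^{\infty}$ off the origin, and satisfies standard CZ size and smoothness bounds, hence is bounded on $L^2$ of any $1$-uniformly rectifiable measure. For the converse, in the AD-regular setting the operator bound combined with the key lemma promotes the above curvature estimate to the uniform local bound $c^2(\mu\lfloor B(x,r))\lesssim r$ for every $x\in E$ and $r>0$, which by the well-known curvature characterization of $1$-uniform rectifiability on $1$-AD-regular sets (Pajot, David-Semmes) gives $1$-UR, as desired.
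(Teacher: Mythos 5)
Your overall route is genuinely different from the paper's, and the difference is worth spelling out. You propose to prove an \emph{unconditional} comparison $p_n(x,y,z)\gtrsim c(x,y,z)^2$ and then invoke L\'eger's original curvature theorem directly. The paper does not state or use such an unconditional bound: Proposition~\ref{posperm}(ii) is a \emph{conditional} estimate, asserting $\sum_{i\neq j}p^i_{1,n}\geq C(\theta_0)c^2$ only under an angular hypothesis relative to a fixed coordinate hyperplane $V_j$. The paper then proves a new L\'eger-type theorem for the functional $p_{1,n}$ (Theorem~\ref{legerperm}), by re-running the arguments of Sections 3--7 of \cite{cmpt} with Proposition~\ref{posperm} replacing the planar ingredients from that paper, and deduces part (1) of Theorem~\ref{recthm1} from Lemma~\ref{mmvcl2} together with Theorem~\ref{legerperm}; part (2) is obtained from Proposition~\ref{posperm} and Section 8 of \cite{cmpt}. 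Interestingly, the unconditional bound you want does in fact follow from the paper's Appendix: the estimate
\begin{equation*}
p^i_{1,n}(x,y,z)\;\geq\; C(n)\,\frac{b_i^{2n}}{|b|^{2n}}\,c(x,y,z)^2
\end{equation*}
(with $b=z-y$ the middle side under the normalization $|a|\leq|b|\leq|a+b|$) is derived at the end of the proof of Proposition~\ref{posperm}, and summing over $i$ after pigeonholing an $i_0$ with $|b_{i_0}|\geq|b|/\sqrt d$ gives $p_{1,n}\geq C(n)d^{-n}c^2$. If one records that corollary, your shorter route to Theorem~\ref{recthm1}(1) via L\'eger's theorem is available; the paper's heavier route has the payoff of yielding Theorem~\ref{legerperm} as a standalone generalization of David--L\'eger.

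However, your proposed method for \emph{proving} the key lemma has a real gap. Expanding the factors $x_i^{2n-1}$ using direction cosines of the $e_i$'s relative to the $2$-plane $V$ of the triple, hoping to recover ``diagonal'' planar instances of the \cite{cmpt} form plus controllable cross-terms, and finishing by compactness on the Grassmannian, is not viable as described. The kernel $K_{1,n}$ is not rotation-invariant for $n\geq 2$, so you cannot rotate $V$ to a coordinate plane without changing the kernel; there is no identification of a ``diagonal part'' with a planar instance of the form from \cite{cmpt}, because the planar results there need comparable-side triangles precisely because the planar bound itself is delicate; and a Grassmannian compactness argument would require a quantitative strict-positivity statement that is exactly what one is trying to prove. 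The paper's actual proof is a hands-on coordinate-by-coordinate case analysis (according to the sign of $a_ib_i$), combined with Heron's formula to express $c^2$ and careful one-variable calculus on auxiliary functions; none of this passes through the geometry of the spanning plane or the Grassmannian. So the statement of your key lemma is defensible, but you should either prove it by the paper's explicit computation or flag it as a black box.
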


The plan of the paper is the following. In section \ref{secsketch} we outline the proof of Theorem \ref{main} which is based on two main 
technical ingredients: positivity of the quantity obtained when symmetrizing the kernel $K_{\a,n}^i$ and the fact that our kernel localizes in the uniform norm. In Section \ref{secperm} we state all the necessary Propositions involving the permutations of the kernels $K_{\a,n}$. Due to their technical nature we have included the proofs of these results in an Appendix.
Section \ref{secloc} is devoted to the proof of the Localization Theorem for our potentials. In section \ref{secoutline} we complete the proof of the 
main theorem showing that $\ga$ is comparable to $C_{\frac 2 3(2-\al),\frac 3 2}$. Finally in section \ref{rect} we elaborate how Theorem \ref{recthm1} follows from our symmetrization results involving the permutations of the kernels $K_{1,n}$ and \cite{cmpt}.

\section{Sketch of the proof of Theorem \ref{main}}\label{secsketch}

Our proof of Theorem \ref{main} rests on two steps:\newline

{\bf First step. } The first step is the analogue of the main result in the paper \cite{semiad}, that is, the equivalence 
between the capacities $\ga$ and $\gamma_{\a,+}^n$, where for compact sets $K\subset\Rd$, $$\gamma_{\a,+}^n(K)=\sup \mu(K),$$ the supremum taken over those positive measures
$\mu$ with support in $K$ whose vector-valued potential $K_{\a,n}*\mu$ lies on the unit ball of $L^{\infty}(\Rd,\Rd)$ (see also \cite{mpv},
\cite{mpv2}, \cite{tams} and \cite{cmpt2} for related results). Clearly, the quantity $\ga$ is larger or equal than $\gamma_{\a,+}^n$. The reverse inequality can be obtained following Tolsa's approach in \cite{semiad}, which is 
based on two main technical points, the first one is the symmetrization of the kernels $K^i_{\a,n}$, $n\in\mathbb{N}$, $0<\a\le 1$, $i=1,2$,
and the second one is a localization result for $K^i_{\alpha,n}$, $i=1,2$.\newline
In section \ref{secperm}, we deal with the symmetrization process for our kernels. We prove, not only the positivity but the explicit description of the quantity 
obtained when symmetrizing the kernels $K^i_{\a, n}$, for $0<\a\le 1$. This will allow us to study the $L^2$-boundedness of the operators with kernel $K^i_{\a, n}$.\newline
The localization result needed in our setting is written in section \ref{secloc}. Specifically, we prove that there exists a positive constant $C$ such that, for each 
compactly supported distribution $T$ and for each coordinate $i$, we have
\begin{equation}\label{prelloc}\left\|K_{\a,n}^i*\varphi_QT\right\|_{\infty}\le C\left\|K_{\a,n}^i*T\right\|_{\infty},\end{equation}
for each square $Q$ and each $\varphi_Q\in {\CC}_0^{\infty}(Q)$ satisfying $\|\varphi_Q\|_\infty\le C$, 
$\|\nabla\varphi_Q\|_\infty\le l(Q)^{-1}$ and $\|\Delta\varphi_Q\|_\infty\le l(Q)^{-2}$, where $l(Q)$ denotes the sidelength of the cube $Q$.
Once the symmetrization and \eqref{prelloc} is at our disposal, Tolsa's machinery applies straighforwardly as was already explained in \cite[Section 2.2]{mpv}.

{\bf Second step. } Once step 1 is performed, i.e. the comparability between the capacities $\ga$ and $\gamma_{\a,+}^n$, we complete the proof 
of the main theorem showing that $\gamma_{\a,+}^n$ is equivalent to $C_{\frac 2 3(2-\a),\frac 3 2}$ in section \ref{secoutline}.

\section{Permutations of the kernels $K_{\alpha,n}$}\label{secperm}
For any three distinct $x,y,z\in\Rn$,  we consider the symmetrization of the kernels $K^i_{\a,n}$:
\begin{equation}
\label{permdef}
\begin{split}
p^i_{\a,n}(x,y,z) &= K^i_{\a,n}(x-y)\,K^i_{\a,n}(x-z) + K^i_{\a,n}(y-x)\,K^i_{\a,n}(y-z) \\&\quad \quad \quad + K^i_{\a,n}(z-x)\,K^i_{\a,n}(z-y).
\end{split}
\end{equation}

We prove that the permutations $p_{\a,n}^i(x,y,z)$, $n\in\N$ and $0<\a<1$ behave like
the inverse of the largest side of the triangle determined by the points $x,y,z$ to the power $2\a$. We also prove a comparability result of the permutations with 
Menger curvature when $\a=1$ which is essential in order to extend the rectifiabilty results from \cite{cmpt}. It is an interesting fact that our proofs also depend on Heron's formula of Euclidean geometry. In order to enhance readability we chose to include the rather lengthy proofs of the following propositions in an Appendix. 



\begin{prop}\label{lemmaperm}
Let $0<\a<1$ and $x,y,z$ be three distinct points in $\Rn$. For $1\le i\le d$ we have
\begin{equation}\label{icomparability}
\frac{A(n, d, \a) \ M_i^{2n}}{L(x,y,z)^{2\a+2n}}\le p^i_{\a,n}(x,y,z)\le \frac{B(n, d, \a)}{L(x,y,z)^{2\a}}
\end{equation}
where $M_i=\max\{|y_i-x_i|,|z_i-y_i|,|z_i-x_i|\}$, $L(x,y,z)$ denotes the length of the largest side of the triangle determined by the three points $x,y,z$ and $A(n, d, \a), B(\a,n)$ 
are some positive constants depending only on $d,\a,n$.
\end{prop}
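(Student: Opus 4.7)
The plan is to clear denominators in $p^i_{\alpha,n}(x,y,z)$ and analyze the resulting polynomial numerator separately for the two bounds. Writing $u = x - y$, $v = y - z$, $w = x - z = u+v$, with $U, V, W$ their respective moduli, and using the oddness of $K^i_{\alpha,n}$, I obtain
\begin{equation*}
p^i_{\alpha,n}(x,y,z) \;=\; \frac{u_i^{2n-1} w_i^{2n-1}\, V^{2n-1+\alpha} \,-\, u_i^{2n-1} v_i^{2n-1}\, W^{2n-1+\alpha} \,+\, v_i^{2n-1} w_i^{2n-1}\, U^{2n-1+\alpha}}{U^{2n-1+\alpha}\, V^{2n-1+\alpha}\, W^{2n-1+\alpha}}.
\end{equation*}
Because $p^i_{\alpha,n}$ is symmetric in its three arguments, I may relabel so that $W = L(x,y,z)$, giving $U, V \le W$ together with $U + V \ge W$.

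For the upper bound, the pointwise estimate $|K^i_{\alpha,n}(\eta)| \le |\eta|^{-\alpha}$ bounds the third summand by $(V W)^{-\alpha}$, which is $\lesssim L^{-2\alpha}$ whenever $V \ge W/2$ (automatic when $U \le W/2$). The remaining pair I group as $K^i_{\alpha,n}(x-y)\bigl[K^i_{\alpha,n}(x-z) - K^i_{\alpha,n}(y-z)\bigr]$; the mean value theorem together with the pointwise gradient bound $|\nabla K^i_{\alpha,n}(\eta)| \le C(n,\alpha)\, |\eta|^{-1-\alpha}$, applied along the segment from $y-z$ to $x-z$ (which stays at distance $\gtrsim V$ from the origin when $U \le V/2$), bounds this pair by $C\, U^{1-\alpha}/W^{1+\alpha} \le C/L^{2\alpha}$, using $\alpha < 1$ and $U \le W$. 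In the complementary case $U$ is comparable to $V$, all three side lengths are comparable to $L$, and the trivial estimate on each of the three summands already suffices.

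For the lower bound I would, after relabeling so that $M_i = |u_i|$, first verify positivity in the collinear subcase $v = \lambda u$ ($\lambda > 0$) by direct substitution; the numerator reduces to $u_i^{2(2n-1)}$ times an explicit polynomial in $U, V, \alpha$ that is strictly positive for $\alpha < 1$. The test configuration $x = 0$, $y = e_i$, $z = 2 e_i$ (where $u_i = v_i = -1$, $w_i = -2$, $U = V = 1$, $W = 2$) yields $p^i_{\alpha,n} = 2^{1-\alpha} - 1 > 0$, confirming the correct scaling $M_i^{2n}/L^{2n+2\alpha}$ with a constant of order $2^{1+\alpha} - 2^{2\alpha}$ that vanishes as $\alpha \uparrow 1$ (which explains the restriction $\alpha < 1$ in the statement). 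For the generic non-collinear configuration, I would decompose the numerator as the collinear-limit value plus a positive semi-definite quadratic form in the transverse components; this is a higher-$n$ generalization of the $n=1, \alpha=1$ identity $u_i^2 V^2 + v_i^2 U^2 - 2 u_i v_i \langle u,v\rangle \ge 0$, whose discriminant equals $U^2 V^2 \sin^2(\angle y) = 4\,(\text{Area})^2$ via Heron's formula. Summing the two contributions yields $N \gtrsim M_i^{2n} \cdot U \cdot V^{2n-1+\alpha}$, and dividing by $U^{2n-1+\alpha} V^{2n-1+\alpha} W^{2n-1+\alpha}$ with $V, W \asymp L$ produces $p^i_{\alpha,n} \gtrsim M_i^{2n}/L^{2n+2\alpha}$.

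The main obstacle is algebraic bookkeeping: expanding $w_i^{2n-1} = (u_i + v_i)^{2n-1}$ by the binomial theorem and reorganizing the resulting cross-terms against the moduli $U^{2n-1+\alpha}, V^{2n-1+\alpha}, W^{2n-1+\alpha}$ into the sum of a manifestly non-negative quadratic form and a positive ``Riesz-type remainder'' (the component that would vanish at $\alpha = 1$) is delicate, and requires systematic use of the law of cosines $W^2 = U^2 + V^2 + 2\langle u,v\rangle$ alongside Heron's formula. This is presumably why the authors defer the full calculation to the appendix.
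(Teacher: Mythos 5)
Your upper-bound sketch is plausible and somewhat different in flavor from the paper's: you invoke the mean value theorem together with a distance-from-origin estimate on the segment from $y-z$ to $x-z$, whereas the paper proves the upper bound by direct algebraic manipulation of the numerator after the substitutions $a=y-x$, $b=z-y$, splitting into the sign cases $a_ib_i>0$ and $a_ib_i<0$ and bounding each term using $a_i\le|a|$ and the ordering $|a|\le|b|\le|a+b|$. Your route would work but needs the case $a_ib_i<0$ addressed explicitly, since the sign pattern in the numerator flips; you do not mention this case distinction, which is central to the paper's argument throughout.

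The lower bound is where there is a real gap. Your plan --- decompose the numerator as ``collinear-limit value plus a positive semi-definite quadratic form in the transverse components'' via Heron's formula and the law of cosines --- is not what the paper does and, more importantly, you have not established it; for non-integer $\alpha$ the numerator involves $U^{2n-1+\alpha}$, $V^{2n-1+\alpha}$, $W^{2n-1+\alpha}$, so it is not a quadratic form in $U,V,W$ and the law of cosines cannot be fed into it the way it can at $\alpha=1$. Your intermediate claim $N\gtrsim M_i^{2n}\,U\,V^{2n-1+\alpha}$ is also dimensionally inconsistent: the numerator $N$ is homogeneous of degree $3(2n-1)+\alpha$ under the scaling $(x,y,z)\mapsto(\lambda x,\lambda y,\lambda z)$, while $M_i^{2n}\,U\,V^{2n-1+\alpha}$ is homogeneous of degree $4n+\alpha$; these agree only for $n=3/2$, so the claimed bound cannot be correct for general $n$. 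The paper's actual proof of the lower bound sets $t=|b|/|a|\ge1$, replaces $|a+b|/|a|$ by the upper bound $1+t$, and analyzes the resulting single-variable function $f_1(t)$ (for $a_ib_i>0$) or $f_2(t)$ (for $a_ib_i<0$), locating the critical point $t^\ast$, checking whether $t^\ast>1$ or $t^\ast\le1$, and bounding the minimum value by explicit constants $A_1(n,\alpha)$, $A_2(n,\alpha)$, $A_3(n)$ that one can check are strictly positive for $0<\alpha<1$. Heron's formula appears only in the proof of Proposition~\ref{posperm}, not here. Your observation that the constant degenerates as $\alpha\uparrow1$ via the test configuration $x=0$, $y=e_i$, $z=2e_i$ is correct and consistent with the paper's constants, but the rest of your lower-bound argument needs to be replaced by something like the paper's calculus-based minimization.
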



We also consider 
\begin{equation}
\label{permdef2}
p_{\a,n}(x,y,z)=\sum_{i=1}^d p^i_{\a,n}(x,y,z).
\end{equation}
Proposition \ref{lemmaperm} allows us to prove the following:

\begin{co}
\label{permalfa}
Let $0<\a<1$ and $x,y,z$ be three distinct points in $\Rn$. Then the following holds
\begin{equation*}
\frac{A(n,d, \a)}{L(x,y,z)^{2\a}}\le p_{\a,n}(x,y,z)\le \frac{B(n,d,\a)}{L(x,y,z)^{2\a}}
\end{equation*}
where $L(x,y,z)$ denotes the length of the largest side of the triangle determined by the three points $x,y,z$ and $A(n,d,\a),B(n,d,\a)$ are some 
positive constants depending on $n$, and $n,d,\a$ respectively.
\end{co}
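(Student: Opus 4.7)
The plan is to deduce Corollary \ref{permalfa} as a direct consequence of Proposition \ref{lemmaperm} by summing the coordinate-wise bounds over $i=1,\dots,d$. The upper bound is essentially immediate, while the lower bound requires one elementary geometric observation.

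For the upper bound, I would simply sum the right-hand inequality in \eqref{icomparability} over $i$. Since $p^i_{\alpha,n}(x,y,z) \le B(n,d,\alpha)/L(x,y,z)^{2\alpha}$ for each $i$, summing yields
$$p_{\alpha,n}(x,y,z) = \sum_{i=1}^d p^i_{\alpha,n}(x,y,z) \le \frac{d\, B(n,d,\alpha)}{L(x,y,z)^{2\alpha}},$$
and we absorb the factor $d$ into the constant.

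For the lower bound, summing the left-hand inequality of \eqref{icomparability} gives
$$p_{\alpha,n}(x,y,z) \ge \frac{A(n,d,\alpha)}{L(x,y,z)^{2\alpha+2n}} \sum_{i=1}^d M_i^{2n},$$
so it suffices to show $\sum_{i=1}^d M_i^{2n} \gtrsim L(x,y,z)^{2n}$ with a constant depending only on $d,n$. This is the only real content to verify. The argument is a one-line pigeonhole: without loss of generality let the longest side be $|x-y| = L(x,y,z)$. Then $L(x,y,z)^2 = \sum_{i=1}^d (x_i-y_i)^2$, so some coordinate $i_0$ satisfies $|x_{i_0}-y_{i_0}| \ge L(x,y,z)/\sqrt{d}$. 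By definition $M_{i_0} \ge |x_{i_0}-y_{i_0}|$, so
$$\sum_{i=1}^d M_i^{2n} \ge M_{i_0}^{2n} \ge \frac{L(x,y,z)^{2n}}{d^{n}}.$$
Substituting this into the displayed inequality produces the lower bound with constant $A(n,d,\alpha)/d^n$.

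There is no genuine obstacle here: both directions are straightforward algebraic consequences of Proposition \ref{lemmaperm}, with the only subtlety being the trivial observation that the Euclidean length of the longest side is controlled (up to a $\sqrt{d}$ factor) by the absolute value of its largest coordinate difference, hence by $M_{i_0}$ for some index $i_0$. All the real work has already been done in proving the coordinate-wise comparability in Proposition \ref{lemmaperm}.
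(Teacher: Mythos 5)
Your proposal is correct and follows essentially the same route as the paper: the paper picks the index $i_1$ maximizing $M_i$, observes that $M_{i_1}$ is comparable to $L(x,y,z)$ (which is exactly your pigeonhole estimate $M_{i_1}\ge L/\sqrt d$), and then invokes Proposition \ref{lemmaperm} for the upper bound by summation and for the lower bound via the $i_1$-th term alone. Your version just spells out the comparability $M_{i_1}\approx L$ and sums all the lower bounds rather than keeping only one; this is an inessential variation.
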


\begin{proof} For $1\le i\le d$ set $M_i=\max\{|y_i-x_i|,|z_i-y_i|,|z_i-x_i|\}$. Without loss of generality assume that 
$M_1=\max\{M_i: 1\le i\le d\}$. Then $M_1$ is comparable to $L(x,y,z)$. The corollary follows from Proposition \ref{lemmaperm}.\end{proof}

For any two distinct points $x_1,x_2 \in \R^d$ we denote by $L_{x_1,x_2}$ the line which contains them and for any two lines $L_1$ and $L_2$ we denote by 
$\mang (L_1,L_2)$ the smallest angle between $L_1$ and $L_2$.
\begin{prop}
\label{posperm}
For any three distinct points $x,y,z \in \R^d$ and $i=1,\dots,d$,
\begin{enumerate}
\item[(i)] $p^i_{1,n}(x,y,z)\geq 0$ and vanishes if and only if $x,y,z$ are collinear or the three points lie on a $(d-1)$-hypersurface perpendicular to the $i$ axis, 
that is $x_i=y_i=z_i$.
\item[(ii)] If $V_j=\{x_j=0\}$ and $\mang{(V_j,L_{x,y})}+\mang{(V_j,L_{x,z})}+ \mang{(V_j,L_{y,z})}\geq \theta_0>0$, then
$$\sum_{i\neq j} p^i_{1,n}(x,y,z)\geq C(\theta_0)c(x,y,z)^2.$$
\end{enumerate}
\end{prop}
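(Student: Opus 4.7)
My plan is to deduce both parts of Proposition~\ref{posperm} from the corresponding planar statements in~\cite{cmpt} via a reduction to the $2$-plane $\Pi$ spanned by $x,y,z$.

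\textbf{Set-up.} Assume $x,y,z$ are non-collinear (otherwise both claims are trivial). Fix an orthonormal basis $\{f_1,f_2\}$ of the tangent space of $\Pi$ and, for each $i\in\{1,\ldots,d\}$, write $\vec\eta_i=(e_i\cdot f_1,\,e_i\cdot f_2)\in\Rd$ for the coordinates of the orthogonal projection of $e_i$ onto $\Pi$. Every difference vector $v\in\{x-y,\,y-z,\,z-x\}$ lies in $\Pi$, so $v_i=\vec\eta_i\cdot v'$ and $|v|=|v'|$, where $v'\in\Rd$ is the planar representative of $v$. Substituting this into~\eqref{permdef} produces the factorisation
$$p^i_{1,n}(x,y,z)\;=\;|\vec\eta_i|^{4n-2}\;\wt p^{\,\hat\eta_i}_{1,n}(x,y,z),$$
where $\hat\eta_i=\vec\eta_i/|\vec\eta_i|$ and $\wt p^{\,\hat\eta_i}_{1,n}$ denotes the analogous permutation computed in $\Pi$ with respect to the unit direction $\hat\eta_i$ (the whole factor vanishes when $\vec\eta_i=0$, i.e.\ $e_i\perp\Pi$, in which case $x_i=y_i=z_i$).

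\textbf{Part (i).} The planar positivity and characterisation of zeros from~\cite{cmpt} yield $\wt p^{\,\hat\eta_i}_{1,n}\ge 0$, vanishing iff $x,y,z$ are collinear in $\Pi$ or have equal $\hat\eta_i$-coordinates. The first case is collinearity in $\Rn$; the second, via $\hat\eta_i\cdot a'=|\vec\eta_i|^{-1}a_i$, gives $x_i=y_i=z_i$. Together with the degenerate $\vec\eta_i=0$ case this is exactly the claim.

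\textbf{Part (ii).} The angle hypothesis forces $|\vec\eta_j|\ge\sin(\theta_0/3)$, since $\sin\mang(V_j,L)\le|\vec\eta_j|$ for every line $L\subset\Pi$; one also verifies $\mang(\Pi\cap V_j,L)\ge\mang(V_j,L)$ (the hyperplane allows more directions than its planar trace), so the hypothesis transfers to a lower bound on the sum of planar angles between the three triangle sides and the line $\Pi\cap V_j\subset\Pi$. Using the factorisation,
$$\sum_{i\ne j}p^i_{1,n}(x,y,z)\;=\;\sum_{i\ne j}|\vec\eta_i|^{4n-2}\;\wt p^{\,\hat\eta_i}_{1,n}(x,y,z),$$
the conclusion $\sum_{i\ne j}p^i_{1,n}\ge C(\theta_0)\,c(x,y,z)^2$ then follows from the planar analogue of~(ii) in~\cite{cmpt}. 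For $n=1$ a direct argument bypasses the angle hypothesis altogether: a short computation shows $\wt p^{\,e}_{1,1}=c(x,y,z)^2/4$ for \emph{every} unit $e$ in a plane, so $p^j_{1,1}=|\vec\eta_j|^2 c^2/4\le c^2/4$, and combined with the $\Rn$ identity $\sum_i p^i_{1,1}=c(x,y,z)^2/2$ (the real form of~\eqref{mel}) this yields $\sum_{i\ne j}p^i_{1,1}\ge c^2/4$.

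\textbf{Main obstacle.} For $n\ge 2$ the planar permutation $\wt p^{\,e}_{1,n}$ varies nontrivially with the direction $e$, so the critical step is a quantitative direction-dependent lower bound of $\wt p^{\,e}_{1,n}$ in terms of $c(x,y,z)^2$. This should follow by extending the Heron-type expansion underlying Proposition~\ref{lemmaperm} to $\alpha=1$. Combining such a bound with the orthonormality constraints $\sum_i\eta_{i,k}\eta_{i,\ell}=\delta_{k\ell}$ on the $\vec\eta_i$'s to lower-bound the weighted sum over $i\ne j$ is the technical heart of the argument.
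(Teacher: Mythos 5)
Your reduction to the $2$-plane $\Pi$ spanned by $x,y,z$, with the factorization $p^i_{1,n}=|\vec\eta_i|^{4n-2}\,\wt p^{\hat\eta_i}_{1,n}$, is a genuinely different route from the paper's. The paper works directly in $\Rn$, expressing $p^i_{1,n}$ as in the appendix formula \eqref{permi} in terms of $a=y-x$, $b=z-y$, running a case analysis on the signs of $a_ib_i$, and combining this with Heron's formula \eqref{herrocurv} to establish the quantitative lower bound \eqref{p11}: $p^i_{1,n}\ge C(n)\,b_i^{2n}|b|^{-2n}c^2(x,y,z)$. Part (ii) then follows directly by choosing the coordinate $i_0\ne j$ picked out by the angle hypothesis. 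Your $n=1$ shortcut for part (ii) via the real Melnikov identity $\sum_i p^i_{1,1}=c^2/2$ together with the constancy of $\wt p^{\,e}_{1,1}$ in the direction $e$ is a clean and correct observation, and it does give an unconditional bound in that case.

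The proposal, however, has a genuine gap that you yourself flag but do not close. The planar input you lean on does not exist in the literature in the form required. As the paper's remark following Theorem \ref{legerperm} makes explicit, both \cite[Proposition 2.1]{cmpt} and \cite[Lemma 2.3]{cmpt} were proved under a comparable-sides hypothesis on the triangle, and removing that hypothesis is itself one of the contributions of Proposition \ref{posperm}. Thus neither the zero-set characterization needed for your part (i) nor, a fortiori, the quantitative lower bound $\wt p^{\,e}_{1,n}\gtrsim c^2$ needed for your part (ii) with $n\ge 2$ is available to cite. Supplying it is exactly the Heron-formula case analysis in the paper's appendix. Since that computation is already carried out coordinate-by-coordinate in terms of $a_i,b_i,|a|,|b|,|a+b|$ and is valid in any $\Rn$, the projection to $\Pi$ does not actually reduce the work; one would be proving essentially the same estimate, only restricted to $d=2$ and with a variable direction $e$. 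Note also that transferring the planar analogue of \eqref{p11} through your factorization yields $p^i_{1,n}\ge C(n)|\vec\eta_i|^{2n-2}\,b_i^{2n}|b|^{-2n}c^2$, which is weaker than \eqref{p11} by the extra factor $|\vec\eta_i|^{2n-2}$, so even with the planar estimate in hand the combination with the frame identity $\sum_i\eta_{i,k}\eta_{i,\ell}=\delta_{k\ell}$ would still need care. In short: the $n=1$ case is correct and elegant, but for $n\ge 2$ you have an outline, not a proof, and the missing step is the core technical content of the Proposition.
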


\section{Growth conditions and localization}\label{secloc}

\subsection{Growth conditions}

Recall that for a compactly supported
distribution $T$ with bounded Cauchy potential 
\begin{equation*}
\begin{split}
|\langle T,\varphi_Q\rangle |&=\left |\left\langle T,\frac{1}{\pi
z}*\overline\partial\varphi_Q\right\rangle \right|=
\left|\left\langle \frac{1}{\pi z}*T, \overline\partial\varphi_Q
\right\rangle\right|\\*[7pt] &\leq \frac 1 \pi\left\|\frac 1{z}*T
\right\|_\infty \,\|\overline\partial\varphi_Q\|_{L^1(Q)}\leq \,
\frac 1 \pi\left\|\frac 1{z}*T \right\|_\infty \,l(Q),
\end{split}
\end{equation*}
whenever $\varphi_Q$ satisfies
$\|\overline\partial\varphi_Q\|_{L^1(Q)} \le l(Q). $
\vspace{.8cm}

We want to deduce a similar growth condition for the case of having bounded $T*K_{\a,n}^i$, $i=1,2,$ potentials. This is crucial in obtaining the localization 
result in Lemma \ref{localization}. The above written argument is based on the fact that one can recover $f$ using the formula $f=(1/\pi)\overline{\partial}f*1/z$.
Therefore,  we need an analogous reproduction formula for 
the kernels $K_{\a,n}^i$, $i=1,2$. In \cite{imrn} (Lemma 3.1)
a reproduction formula for $x_i/|x|^{1+\al}$, $0<\al<1$, in $\mathbb{R}^d$ was found. In our current setting, the kernels depend on $n \in \N$ hence the arguments are more technically involved.
\begin{lemma}\label{reproductionformula}
If a function $f$ has continuous derivatives up to order two, then it is representable in the form
\begin{equation}\label{repr}
 f(x)=(\varphi_1*K_1)(x)+(\varphi_2*K_2)(x),\;\;x\in\Rd,
\end{equation}
where for $i=1,2$,
$$\varphi_i=S_i(\Delta f)*\frac{x_i}{|x|^{3-\alpha}}:=\left(c\Delta f+\widetilde S_i(\Delta f)\right)*\frac{x_i}{|x|^{3-\alpha}},$$
for some constant $c$ and Calder\'on-Zygmund operators $\widetilde S_1$ and $\widetilde S_2$.

\end{lemma}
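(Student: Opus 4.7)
The plan is to verify (\ref{repr}) on the Fourier side, in the spirit of \cite[Lemma 3.1]{imrn} for the case $n=1$, but accounting for the more involved structure coming from the exponent $2n-1$ in the numerator of $K_{\alpha,n}^i$. Taking Fourier transforms of both members, using $\widehat{\Delta f}(\xi)=-4\pi^{2}|\xi|^{2}\hat f(\xi)$, and letting $\sigma_i$ denote the Fourier multiplier of $S_i$, the claim becomes the pointwise identity
\begin{equation*}
\hat f(\xi)\;=\;-4\pi^{2}|\xi|^{2}\,\hat f(\xi)\sum_{i=1}^{2}\widehat{K_{\alpha,n}^i}(\xi)\,\widehat{\tfrac{x_i}{|x|^{3-\alpha}}}(\xi)\,\sigma_i(\xi).
\end{equation*}
Once the two Fourier transforms on the right are computed explicitly, this reduces to an algebraic equation on the unit circle $S^{1}\subset\R^{2}$ whose solution, after separating a constant part, yields the claimed splitting $S_i=cI+\widetilde S_i$.

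For the relevant Fourier transforms: since $K_{\alpha,n}^i$ is odd and homogeneous of degree $-\alpha$, one has $\widehat{K_{\alpha,n}^i}(\xi)=\Omega_i(\xi/|\xi|)|\xi|^{\alpha-2}$ for an odd function $\Omega_i$ on $S^{1}$, obtained by expanding $\omega\mapsto\omega_i^{2n-1}$ into the odd-degree spherical harmonics $\cos(2k+1)\theta$, $0\le k\le n-1$, and applying the Funk--Hecke / Riesz potential identities. On the other hand, writing $\tfrac{x_j}{|x|^{3-\alpha}}=(\alpha-1)^{-1}\partial_j|x|^{\alpha-1}$ and using $\widehat{|x|^{\alpha-1}}(\xi)=c|\xi|^{-1-\alpha}$ yields
\begin{equation*}
\widehat{\tfrac{x_j}{|x|^{3-\alpha}}}(\xi)\;=\;c_\alpha\,i\xi_j\,|\xi|^{-\alpha-1}.
\end{equation*}
Plugging both into the identity above, the $|\xi|$-powers collapse exactly and the problem reduces to finding bounded zero-homogeneous $\sigma_1,\sigma_2$ on $S^{1}$ with
\begin{equation*}
\sum_{i=1}^{2}\omega_i\,\Omega_i(\omega)\,\sigma_i(\omega)\;=\;\kappa,\qquad\omega\in S^{1},
\end{equation*}
for a specific nonzero constant $\kappa$. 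By symmetry of the kernels under $(x_1,x_2)\leftrightarrow(x_2,x_1)$, one has $\Omega_2(\omega_1,\omega_2)=\Omega_1(\omega_2,\omega_1)$, so the symmetric ansatz $\sigma_2(\omega_1,\omega_2)=\sigma_1(\omega_2,\omega_1)$ produces solutions with equal mean value on $S^{1}$; separating this common mean value $c$ gives $\sigma_i=c+\widetilde\sigma_i$ with $\widetilde\sigma_i$ smooth, zero-homogeneous and of zero mean on $S^{1}$, i.e.\ the symbol of a convolution Calder\'on--Zygmund operator $\widetilde S_i$. For $n=1$ one has $\Omega_i(\omega)=c'\omega_i$ and $\sum_i\omega_i\Omega_i\equiv c'$, so $\sigma_i$ is constant and $\widetilde S_i=0$, recovering the formula of \cite{imrn}.

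The hard part, and what really distinguishes general $n\ge 2$ from the case $n=1$, is proving that the function $A(\omega):=\omega_1\Omega_1(\omega)+\omega_2\Omega_2(\omega)$ does not vanish on $S^{1}$; otherwise the algebraic equation above cannot be inverted with bounded multipliers. The spatial counterpart $\sum_i x_iK_{\alpha,n}^i(x)=\sum_i x_i^{2n}/|x|^{2n-1+\alpha}$ is strictly positive off the origin, which makes it reasonable to expect nonvanishing of $A$ on the frequency side; the verification, however, requires tracking signs carefully through the spherical-harmonic expansion of $\omega_i^{2n-1}$ and using that the Funk--Hecke multipliers attached to odd-degree spherical harmonics for the Riesz potential of order $\alpha\in(0,1)$ all carry the same sign. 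Once this nonvanishing is established, Fourier inversion produces (\ref{repr}) on the Schwartz class, and a standard density argument extends it to $C^{2}$ functions of sufficient decay.
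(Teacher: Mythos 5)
Your Fourier-side strategy is the same as the paper's, and the reduction to finding bounded zero-homogeneous multipliers $\sigma_1,\sigma_2$ with $\omega_1\Omega_1(\omega)\sigma_1(\omega)+\omega_2\Omega_2(\omega)\sigma_2(\omega)\equiv\kappa$ is correct. However, there are two genuine gaps. First, you never construct $\sigma_1,\sigma_2$: the ``symmetric ansatz'' does not by itself determine a solution, and the nonvanishing of $A(\omega)=\omega_1\Omega_1+\omega_2\Omega_2$ only helps if you take $\sigma_1=\sigma_2=\kappa/A$, which you do not say. The paper's key observation is different and cleaner: by Lemma~\ref{fourier}, $\Omega_i(\omega)=c\,\omega_i\,p_i(\omega)$ on $S^1$ with $p_1(\xi)=p(\xi_1,\xi_2)$, $p_2(\xi)=p(\xi_2,\xi_1)$, so choosing $\sigma_i(\omega)=c'/p_i(\omega)$ decouples the problem term by term and reduces the identity to $\omega_1^2+\omega_2^2=1$. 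This avoids the need to analyze the combined function $A$ and instead requires the sharper but more structured fact that each $p_i$ is nonvanishing on $S^1$. The invertibility of the operators with symbol $p_i(\xi)/|\xi|^{2n-2}$ as (constant)${}+{}$(smooth CZ operator) then follows from Duoandikoetxea's Theorem~4.15, yielding $S_i=cI+\widetilde S_i$.

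Second, the nonvanishing is the actual technical crux and you do not prove it. You gesture at a sign argument via Funk--Hecke multipliers, which is the right intuition, but the paper supplies the proof: Lemma~\ref{coeff} shows by a delicate combinatorial/inductive identity that all coefficients $b_{2l}$ of the polynomial $p(\xi_1,\xi_2)=\sum_l b_{2l}\xi_1^{2l}\xi_2^{2(n-1-l)}$ have the same (negative) sign, hence $p$ only vanishes at the origin. Without this, the inversion step has no basis. Finally, a small imprecision: you write the required identity with an extra $|\xi|^2$ coming from $\widehat{\Delta f}$, but note that the paper's $\varphi_i$ already contains the convolution with $x_i/|x|^{3-\alpha}$ which supplies the compensating factor $|\xi|^{-\alpha-1}$, so the exponent bookkeeping works out exactly when $\sigma_i$ is zero-homogeneous, as you implicitly assume.
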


Once Lemma \ref{reproductionformula} is available, we obtain the desired growth condition for our compactly supported distribution $T$ with bounded  potentials $K_{\a,n}^i*T$, $i=1,2$:

\begin{equation}\label{ourgrowth}
\begin{split}
|\langle T,\varphi_Q\rangle |&=\left |\left\langle T,K_1*S_1(\Delta\partial_1\varphi_Q)*\frac1{|x|^{1-\al}}+K_2*S_2(\Delta\partial_2\varphi_Q)*\frac1{|x|^{1-\al}}\right\rangle \right|\\*[7pt] &\le
\sum_{i=1}^2\left|\left\langle S_i(K_{\a,n}^i*T),\Delta\partial_i\varphi_Q*\frac1{|x|^{1-\al}}
\right\rangle\right|\\*[7pt] &\leq  \sum_{i=1}^2\left\|S_i(K_{\a,n}^i*T)
\right\|_{\tiny{\mbox{BMO}}} \,\left\|\Delta\partial_i\varphi_Q*\frac1{|x|^{1-\al}}\right\|_{H^1(\Rd)}\\*[7pt] &\leq\,
C \,l(Q)^\al,
\end{split}
\end{equation}
whenever $\varphi_Q$ satisfies the conditions 
\begin{equation}\label{normalization1}
\left\|\Delta\partial_i\varphi_Q*\frac1{|x|^{1-\al}}\right\|_{H^1(\Rd)} \le l(Q)^\al,\,\,\,\mbox{for }\, i=1,2. 
\end{equation}

Observe that the penultimate inequality in \eqref{ourgrowth} comes from the fact that Calder\'on-Zygmund operators send $L^\infty$ to BMO. Recall that a function $f\in\mbox{H}^1(\Rd)$ if and only if
$f\in L^ 1(\Rd)$  and all its Riesz transforms $R_j$, $1\le j\le2,$ (the Calder\'on-Zygmund operators with Fourier multiplier $\xi_j/|\xi|$) are also in $L^1(\Rd)$. The norm of $H^1(\Rd)$ is defined as 
$$\|f\|_{H^1(\Rd)}=\|f\|_{L^1(\Rd)}+\sum_{j=1}^2\|R_j(f)\|_{L^1(\Rd)}.$$

We now formulate  a definition. We say that a distribution $T$ has growth $\alpha$ provided that
$$G_\al(T)=\sup_{\varphi_Q}\frac{|\langle T,\varphi_Q\rangle|}{l(Q)^\al}<\infty,$$
where the supremum is taken over all $\varphi_Q\in \cc_0^\infty(Q)$ satisfying the normalization inequalities \eqref{normalization1} (see also \cite{mpv2} and \cite{tams}, for similar conditions).
The normalization in the $H^1$ norm is the right condition to impose, as will
become clear later on. Recall that a positive Radon measure has growth $\al$ provided $\mu(B(x,r))\le Cr^\al$, for $x\in\Rd$ and $r\ge 0$. 
For positive Radon measures $\mu$ in $\Rd$ the preceding notion
of $\al$ growth is equivalent to the usual one. 

Notice that from \eqref{ourgrowth}, if $T$ is a compactly supported distribution with bounded potentials $K_1*T$ and $K_2*T$, then $T$ has growth $\al$.

For the proof of Lemma \ref{reproductionformula} we need to compute the Fourier transform of the kernels 
$K_{\a,n}^i(x)=x_i^{2n-1}/|x|^{2n-1+\al}$, $1\le i\le 2$, $n\in\N$, $0<\al<1$ (see Lemma 12 in \cite{cmpt2} 
for the case $\al=1$). 
\begin{lemma}\label{fourier}
 For $1\le i\le 2$, $n\in\N$ and $0<\al<1$,
\begin{equation*}
 \widehat{K_{\a,n}^i}(\xi)=c\frac{\xi_i}{|\xi|^{2n+1-\al}}p(\xi_1,\xi_2),
\end{equation*}
for some homogeneous polynomial $p(\xi_1,\xi_2)$ of degree $2n-2$ with no non-vanishing zeros and some positive constant $c:=c(\al,n)$. 
\end{lemma}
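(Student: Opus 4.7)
By symmetry it suffices to compute $\widehat{K_{\alpha,n}^1}$. The plan is a three-step computation followed by a sign analysis: decompose $x_1^{2n-1}$ into solid spherical harmonics times even powers of $|x|$, apply the Bochner--Stein Fourier identity to each piece, and reassemble; the technical heart will lie in checking that the resulting polynomial $p$ has no non-trivial zeros.

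First I would identify $\mathbb{R}^2$ with $\mathbb{C}$ via $z=x_1+ix_2$. Expanding $x_1^{2n-1}=((z+\bar z)/2)^{2n-1}$ by the binomial theorem and pairing conjugate powers yields
\begin{equation*}
x_1^{2n-1}=\frac{1}{2^{2n-2}}\sum_{k=0}^{n-1}\binom{2n-1}{k}|x|^{2k}\,h_{2n-1-2k}(x),\qquad h_j(x):=\operatorname{Re}(z^j).
\end{equation*}
Each $h_j$ is a solid harmonic polynomial of odd degree $j$, and splitting $(x_1+ix_2)^j$ into real and imaginary parts shows $h_j(x)=x_1\,Q(x_1^2,x_2^2)$ for an explicit polynomial $Q$ of degree $(j-1)/2$; this factorization is what will supply the prefactor $\xi_1$ in front of $p$.

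Next I would divide by $|x|^{2n-1+\alpha}$ and apply, term by term, the Bochner--Stein identity
\begin{equation*}
\mathcal{F}\!\bigl[h_j(x)\,|x|^{-(j+\alpha)}\bigr](\xi)=\gamma_{j,\alpha}\,h_j(\xi)\,|\xi|^{-(j+2-\alpha)},\quad\gamma_{j,\alpha}:=2^{2-\alpha}\pi\,i^{-j}\,\frac{\Gamma\!\bigl(\tfrac{j+2-\alpha}{2}\bigr)}{\Gamma\!\bigl(\tfrac{j+\alpha}{2}\bigr)},
\end{equation*}
valid in the sense of tempered distributions provided both gamma arguments are positive, which is guaranteed by $0<\alpha<1$. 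Summing, extracting the common denominator $|\xi|^{2n+1-\alpha}$, and pulling out $\xi_1$ from each $h_{2n-1-2k}(\xi)=\xi_1\,Q_k(\xi_1^2,\xi_2^2)$ then delivers
\begin{equation*}
\widehat{K_{\alpha,n}^{1}}(\xi)=\frac{\xi_1}{|\xi|^{2n+1-\alpha}}\,p(\xi_1,\xi_2),\quad p(\xi_1,\xi_2):=\sum_{k=0}^{n-1}a_k\,(\xi_1^2+\xi_2^2)^k\,Q_k(\xi_1^2,\xi_2^2),
\end{equation*}
where $a_k:=\binom{2n-1}{k}\gamma_{2n-1-2k,\alpha}/2^{2n-2}$. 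A degree count makes $p$ homogeneous of degree $2n-2$ and separately even in $\xi_1$ and $\xi_2$.

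The hard part will be showing that $p$ vanishes only at the origin. Since $i^{-(2n-1-2k)}=(-1)^{k-n}i$, factoring the common $(-1)^n i$ out of every $a_k$ presents $p$ as $(-1)^n i\,\tilde p$ with $\tilde p$ a real polynomial in $(\xi_1^2,\xi_2^2)$. Its coefficients in the monomial basis $\xi_1^{2r}\xi_2^{2s}$, $r+s=n-1$, are rational combinations of the gamma ratios $\Gamma((2n+1-2k-\alpha)/2)/\Gamma((2n-1-2k+\alpha)/2)$, which telescope via $\Gamma(t+1)=t\,\Gamma(t)$ to elementary rational functions of $\alpha$. A direct computation at $n=2,3$ shows that every such coefficient reduces to a negative multiple of $\alpha$ (times a positive rational in $\alpha$), and I would establish this for all $n$ by induction, exploiting the recursive relation between the $Q_k$ as $n$ varies. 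Once the sign-consistency is in hand, $\tilde p$ is a definite form in $(\xi_1^2,\xi_2^2)$ and hence nonzero away from the origin, as required.
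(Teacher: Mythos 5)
Your decomposition-and-Bochner--Stein route is genuinely different from the paper's. The paper instead observes that $\widehat{K^1_{\a,n}}(\xi)=c\,\partial_1^{2n-1}|\xi|^{2n-3+\a}$ and then invokes a Lyons--Zumbrun formula that expresses $L(\partial)$ of a radial function as $\sum_{k}\frac{1}{2^k k!}\Delta^kL(x)\bigl(\frac{1}{r}\partial_r\bigr)^{2n-1-k}$ applied to it; with $L(x)=x_1^{2n-1}$ and $\Delta^k(x_1^{2n-1})=\binom{2n-1}{2k}(2k)!\,x_1^{2n-2k-1}$ this produces, after bookkeeping, exactly the polynomial $p(x_1,x_2)=\sum_k a_k\,x_1^{2(n-k-1)}|x|^{2k}$ with explicit descending-product coefficients $a_k$. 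Your use of the solid-harmonic decomposition $x_1^{2n-1}=2^{-(2n-2)}\sum_k\binom{2n-1}{k}|x|^{2k}\operatorname{Re}(z^{2n-1-2k})$ and the Bochner--Stein Fourier identity is a clean alternative, and it lands in an essentially equivalent place: a degree-$(2n-2)$ homogeneous polynomial whose coefficients in the even-monomial basis $\xi_1^{2r}\xi_2^{2s}$ are signed rational functions of $\a$, and whose nonvanishing off the origin is equivalent to those coefficients all having one sign.

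That last step is where your argument has a real gap, and it is not a small one: it is precisely the nontrivial content of the lemma. You check $n=2,3$ and then write that you ``would establish this for all $n$ by induction, exploiting the recursive relation between the $Q_k$,'' but no such induction is exhibited, and it is not at all clear that the $Q_k$-recursion does the job directly. In the paper this is Lemma \ref{coeff}: the coefficient $b_{2l}$ is an alternating sum
\begin{equation*}
b_{2l}=\frac{1}{(n-l-1)!}\sum_{k=1}^{l+1}(-1)^k\frac{(1-\a)(3-\a)\cdots(2k-1-\a)}{2^{n-k}\,(2k-1)!\,(l+1-k)!},
\end{equation*}
and proving that this equals a constant multiple of $-(1-\a)\,\a(\a+2)\cdots(\a+2(l-1))$ requires showing, via a separate inductive binomial identity, that the polynomial $p(\a)$ on the left vanishes at $\a=1,0,-2,\dots,-2(l-1)$, and then reading off the leading coefficient. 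Your Bochner--Stein constants $\gamma_{j,\a}$ telescope into exactly the same kind of descending products, so you would face the identical alternating sum; until you actually prove that it has the claimed sign for all $n$ and all $0<\a<1$ (not just for $n=2,3$), the claim that $p$ has no nonzero roots is unsupported. Note also that the sign constraint genuinely uses $\a<1$ (the factor $1-\a$), so any induction must track that hypothesis; a naive recursion in $n$ that ignores it will not close.
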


To prove Lemma \ref{fourier}, the following identity is vital.

\begin{lemma}\label{coeff} For $n\in\N$ and $0\le l\le n-1$,
$$\sum_{k=1}^{l+1}(-1)^k\frac{(1-\a)(3-\a)\cdots(2k-1-\a)}{2^{n-k}\ (2k-1)!\ (l+1-k)!}=-\frac{(1-\a)\a(\a+2)(\a+4)\dots(\a+2(l-1))}{2^{n-1-l}\ (2l+1)!}.$$
\end{lemma}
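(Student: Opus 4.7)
The plan is to recognize the identity as an instance of the Chu--Vandermonde summation. Observe first that $n$ enters only through the factors $2^{n-k}$ on the left and $2^{n-1-l}$ on the right, so multiplying both sides by $2^{n-1-l}$ removes this parameter entirely and reduces the claim to the equivalent statement
\begin{equation*}
\sum_{k=1}^{l+1}(-1)^{k}\,\frac{(1-\a)(3-\a)\cdots(2k-1-\a)}{2^{l+1-k}\,(2k-1)!\,(l+1-k)!}=-\,\frac{(1-\a)\a(\a+2)\cdots(\a+2(l-1))}{(2l+1)!}.
\end{equation*}

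The next step is to encode all the ingredients via the Pochhammer symbol $(x)_{k}=x(x+1)\cdots(x+k-1)$. Writing $(1-\a)(3-\a)\cdots(2k-1-\a)=2^{k}\bigl(\tfrac{1-\a}{2}\bigr)_{k}$ and, by Legendre duplication, $(2k-1)!=2^{2k-1}(k-1)!\,(\tfrac{1}{2})_{k}$, then shifting the summation index by $j=k-1$ and using the elementary relations $\bigl(\tfrac{1-\a}{2}\bigr)_{j+1}=\tfrac{1-\a}{2}\bigl(\tfrac{3-\a}{2}\bigr)_{j}$ and $\bigl(\tfrac{1}{2}\bigr)_{j+1}=\tfrac{1}{2}\bigl(\tfrac{3}{2}\bigr)_{j}$, the factors of $2$ cancel exactly and the left-hand side collapses to
\begin{equation*}
-\,\frac{1-\a}{2^{l}\,l!}\,\sum_{j=0}^{l}(-1)^{j}\binom{l}{j}\,\frac{((3-\a)/2)_{j}}{(3/2)_{j}}.
\end{equation*}

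The inner sum is precisely the Chu--Vandermonde identity $\sum_{j=0}^{l}(-1)^{j}\binom{l}{j}(b)_{j}/(c)_{j}=(c-b)_{l}/(c)_{l}$ specialized to $b=(3-\a)/2$ and $c=3/2$, which evaluates to $(\a/2)_{l}/(3/2)_{l}$. Unfolding the two remaining Pochhammer symbols as $(\a/2)_{l}=\a(\a+2)\cdots(\a+2(l-1))/2^{l}$ and $(3/2)_{l}=(2l+1)!/(2^{2l}\,l!)$ and tallying the powers of $2$ produces exactly the announced right-hand side. The only delicate part of the plan is the careful bookkeeping of the powers of $2$ and factorials introduced by the Pochhammer/duplication conversions; Chu--Vandermonde itself admits a routine one-line induction proof on $l$ using Pascal's rule, so no substantive analytic input is required beyond these elementary manipulations.
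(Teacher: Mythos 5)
Your proof is correct; I checked the Pochhammer conversions, the Legendre-duplication rewrite of $(2k-1)!$, the power-of-two bookkeeping, the index shift, and the Chu--Vandermonde evaluation, and each step goes through. It is, however, a genuinely different argument from the paper's. The paper treats the left-hand side as a polynomial $p(\a)$ of degree $l+1$ in $\a$, exhibits the $l+1$ roots $\a=1,0,-2,-4,\dots,-2(l-1)$ by reducing the evaluation $p(-2j)=0$ to the auxiliary finite-difference identity $\sum_{m=0}^{l}\frac{(-1)^m m^i}{m!\,(l-m)!}=0$ for $0\le i<l$ (itself proved by a small induction on $i$), and then pins down the remaining multiplicative constant by matching leading coefficients of $\a^{l+1}$. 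Your route instead recognizes the whole sum as a terminating ${}_2F_1$ at $1$ and evaluates it in closed form by Chu--Vandermonde in a single stroke, with no root-hunting and no auxiliary lemma. The trade-off is essentially one of taste: the paper stays with bare binomial manipulations and a self-contained induction, at the price of an indirect two-stage argument; your hypergeometric repackaging is shorter and more conceptual once one is comfortable with Pochhammer algebra, and since Chu--Vandermonde itself has an elementary inductive proof via Pascal's rule, as you note, no heavier machinery is really being imported.
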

\begin{proof}Consider the polynomial
\begin{equation}\label{polynomial1}
p(\a)=\sum_{k=1}^{l+1}(-1)^k\frac{(1-\a)(3-\a)\dots(2k-1-\a)}{ 2^{n-k}\ (2k-1)!\ (l+1-k)!\ }.
\end{equation}
It follows immediately that $\a=1$ is a root of $p$. In the following we will show that $0,-2,-4,\dots,-2(l-1)$ are also roots of $p$. For $j=0,1,\dots,l-1$
\begin{equation*}
\begin{split}
p(-2j)&=\sum_{k=1}^{l+1} (-1)^k\frac{(1+2j)(3+2j)\dots(2k-1+2j)}{(2k-1)!2^{n-k}(l+1-k)!}\\
&=\frac{1}{1\cdot 3\dots (2j-1)}\sum_{k=1}^{l+1} (-1)^k\frac{(2k+2j)!}{(2k-1)!2^{n-k}(l+1-k)!2^{k+j}(k+j)!}\\
&=\frac{1}{1\cdot 3\dots (2j-1)\ 2^{n+j}}\sum_{k=1}^{l+1}(-1)^k \frac{2k (2k+1)\dots (2k+2j)}{(k+j)!(l+1-k)!}\\
&=\frac{1}{1\cdot 3\dots (2j-1)\ 2^{n+j}}\cdot\\
&\quad\quad\quad\sum_{k=1}^{l+1}(-1)^k \frac{2^{j+1} k\cdot (k+1) \dots (k+j) \ (2k+1)(2k+3)\dots (2k+2j-1)}{(k-1)!k \cdot (k+1)\dots (k+j)\ (l+1-k)!}.
\end{split}
\end{equation*}Hence
\begin{equation*}
\begin{split}
p(-2j)&=\frac{1}{1\cdot 3\dots (2j-1)\ 2^{n-1}}\sum_{k=1}^{l+1}(-1)^k\frac{ \ (2k+1)(2k+3)\dots (2k+2j-1)}{(k-1)!\ (l+1-k)!}\\
&=\frac{1}{1\cdot 3\dots (2j-1)\ 2^{n-1}}\sum_{k=1}^{l+1} \sum_{i=0}^j \frac{(-1)^kc_i k^i}{(k-1)!\ (l+1-k)!}\\
&=\frac{1}{1\cdot 3\dots (2j-1)\ 2^{n-1}} \sum_{i=0}^jc_i\sum_{k=1}^{l+1} \frac{ (-1)^kk^i}{(k-1)!\ (l+1-k)!}
\end{split}
\end{equation*} 
Therefore in order to prove that $-2j, j=0,\dots,l-1$ are roots of $p$ it suffices to show that 
\begin{equation*}
\sum_{k=1}^{l+1} \frac{(-1)^k k^i}{(k-1)!\ (l+1-k)!}=-\sum_{m=0}^l \frac{(-1)^m (m+1)^i}{m!\ (l-m)!}=0
\end{equation*}
for $i=0,\dots,j$.  This will follow  immediately if we show that for any $l\geq1$
\begin{equation}
\label{inductionone}
\sum_{m=0}^l \frac{(-1)^m m^i}{m!\ (l-m)!}=0
\end{equation}
for $i=0,\dots,j$.
We will prove \eqref{inductionone} by induction. For $i=0$ we have that for any $l \geq1$
\begin{equation*}
\sum_{m=0}^l \frac{(-1)^m }{m!\ (l-m)!}=\frac{1}{l!}\sum_{m=0}^l \binom{l}{m}(-1)^m=\frac{(1-1)^l}{l!}=0.
\end{equation*}
We will now assume that for any $l\geq 1$
\begin{equation}
\label{indhyp}
\sum_{m=0}^l \frac{(-1)^m m^i}{m!\ (l-m)!}=0
\end{equation}
for $i=0,\dots,j-1$. Then by \eqref{indhyp},
\begin{equation*}
\begin{split}
\sum_{m=0}^l \frac{(-1)^m m^j}{m!\ (l-m)!}&=\sum_{m=1}^l \frac{(-1)^m m^{j-1}}{(m-1)!\ (l-m)!}\\
&=\sum_{N=0}^{l-1}\frac{(-1)^{N+1} (1+N)^{j-1}}{N!\ (l-1-N)!}\\
&=\sum_{N=0}^{l-1}\frac{(-1)^{N+1} \sum_{i=0}^{j-1}\binom{j-1}{i}N^i}{N!\ (l-1-N)!}\\
&=\sum_{i=0}^{j-1}\binom{j-1}{i}\sum_{N=0}^{l-1}\frac{(-1)^{N+1} N^{i}}{N!\ (l-1-N)!}\\
&=0.
\end{split} 
\end{equation*}

Hence we have shown that
\begin{equation*}
p(\a)=C(l)(1-\a)\a(\a+2)\dots(\a+2(l-1)).
\end{equation*}
Plugging this into \eqref{polynomial1} we see that $-C(l)$ is the coefficient of 
the greatest degree monomial of the polynomial in \eqref{polynomial1}, that is,
\begin{equation*}
C(l)= -\text{coefficient of }\a^{l+1} =-\frac{(-1)^{l+1}(-1)^{l+1}} {(2l+1)! \ 2^{n-l-1}} =-\frac{2^{l+1}}{2^n (2l+1)!}.
\end{equation*}
\end{proof}

\begin{proof}[Proof of lemma \ref{fourier}] Without loss of generality fix $i=1$. Notice that for some constant $c$, $$\displaystyle{\widehat{K_{\a,n}^i}(\xi)=c\;\partial_1^{2n-1}|\xi|^{2n-3+\al}.}$$  
To compute $\partial_1^{2n-1}|x|^\beta$, 
for $\b=2n-3+\a$, we will use the following formula from \cite{lz}:

\begin{equation*}
L(\partial)E_n=\sum_{k=0}^{n-1}\frac 1{2^k\;k!}\;\Delta^k L(x)\left(\frac 1 r\frac{\partial}{\partial r}\right)^{2n-1-k}E_n(r),
\end{equation*}
where $r=|x|$ and $L(x)=x_1^{2n-1}$.  First notice that for $0\le k\le n-1$, we have

$$\Delta^k(x_1^{2n-1})=\binom{2n-1}{2k}(2k)!\;x_1^{2n-2k-1},$$
and one can check that
$$\left(\frac 1 r\frac{\partial}{\partial r}\right)^{m}r^\beta=\b(\b-2)\dots(\b-2m+2)r^{\b-2m}.$$
Therefore for $E_n=|x|^\beta$ and $\beta=2n-3+\a$,
\begin{equation*}
\begin{split}
\partial_1^{2n-1}|x|^\beta&=\sum_{k=0}^{n-1} \binom{2n-1}{2k}\frac{(2k)!}{2^k k!}x_1^{2n-2k-1}\b (\b-2)\dots(\b-2(2n-1-k)+2)r^{\b-2(2n-1-k)} \\
&=\frac{x_1}{|x|^{4n-2-\b}}(2n-1)!\b(\b-2)\dots(\b-2(n-2)) \cdot\\
&\quad\quad\quad\sum_{k=0}^{n-1} \frac{x_1^{2(n-k-1)}|x|^{2k}}{(2n-1-2k)!\ 2^k \ k!}(\b-2(2n-2-(n-1)))\dots(\b-2(2n-2-k))\\
&=c(n)\frac{x_1}{|x|^{2n+1-\a}}\sum_{k=0}^{n-1} \frac{x_1^{2(n-k-1)}|x|^{2k}}{(2n-1-2k)!\ 2^k\ k!}(-1+\a)(-3+\a)\dots(-2(n-k)+1+\a).
\end{split}
\end{equation*}

Therefore
\begin{equation}
 \label{polyn}
\partial_1^{2n-1}|x|^\beta =c(n)\frac{x_1}{|x|^{2n+1-\a}}\sum_{k=0}^{n-1} a_k x_1^{2(n-k-1)}|x|^{2k},
\end{equation}
where $$a_k=(-1)^{n-k} \frac{(1-\a)(3-\a)\dots(2(n-k)-1-\a)}{(2n-1-2k)!\ 2^k \ k!}$$

We claim that the homogeneous polynomial of degree $2n-2$ in \eqref{polyn}, namely, 
\begin{equation}
 p(x_1,x_2)=\sum_{k=0}^{n-1} a_k x_1^{2(n-k-1)}|x|^{2k},
\end{equation}
has negative coefficients. Notice that
\begin{equation*}\begin{split}
p(x)&=\sum_{k=0}^{n-1}\;a_k \;x_1^{2(n-k-1)}\;(x_1^2+x_2^2)^{k}=\sum_{k=0}^{n-1}\sum_{j=0}^{k}a_k\binom{k}{j}x_1^{2(n-k+j-1)}x_2^{2(k-j)}\\&=\sum_{l=0}^{n-1}b_{2l}x_1^{2m}x_2^{2(n-1-l)},
\end{split}
\end{equation*}
where for $0\le l\le n-1$, 
$$b_{2l}=\sum_{k=1}^{l+1}a_{n-k}\binom{n-k}{l+1-k}=\sum_{k=1}^{l+1}\frac{(-1)^k(1-\a)(3-\a)\dots(2k-1-\a)}{(2k-1)!\ 2^{n-k}\ (l+1-k)!\ (n-l-1)!}.$$
Applying now Lemma \ref{coeff}, we get that the coefficients $b_{2l}$, $0\le l\le n-1$, of the polynomial $p$ are negative.
\end{proof}

Now we are ready to prove the reproduction formula that will allow as to obtain the localization result that we need.

\begin{proof}[Proof of Lemma \ref{reproductionformula}] By lemma \ref{fourier}, the Fourier transform of \eqref{repr} is 
$$\widehat f(\xi)=\widehat\varphi_1(\xi)\frac{\xi_1}{|\xi|^{3-\a}}\frac{p(\xi_1,\xi_2)}{|\xi|^{2n-2}}+\widehat\varphi_2(\xi)\frac{\xi_2}{|\xi|^{3-\a}}\frac{p(\xi_2,\xi_1)}{|\xi|^{2n-2}},$$
where $p$ is some homogeneous polynomial of degree $2n-2$ with no non-vanishing zeros. 

Define the operators $R_1, R_2$ associated to the kernels 
$\displaystyle{\widehat r_1(\xi_1,\xi_2)=\frac{p(\xi_1,\xi_2)}{|\xi|^{2n-2}}}$ and \newline$\displaystyle{\widehat r_2(\xi_1,\xi_2)=\widehat r_1(\xi_2,\xi_1)}$ respectively.
Since $p$ is a homogeneous polynomial of degree $2n-2$, it can be decomposed as $$p(\xi_1,\xi_2)=\sum_{j=0}^{n-1}p_{2j}(\xi_1,\xi_2)|\xi|^{2n-2-2j},$$ with $p_{2j}$ being 
homogeneous harmonic polynomials of degree $2j$ (see  \cite[Section 3.1.2, p. 69]{stein}). Hence, the operators $R_i$, $i=1,2,$ can be written as
\begin{equation}\label{invertible}
R_if=cf+\mbox{p.v.}\frac{\Omega(x/|x|)}{|x|^2}*f,
\end{equation}
for some constant $c$ and $\Omega\in\mathcal{C}^\infty$ with zero average. Therefore, by  \cite[Theorem 4.15, p. 82]{duandikoetxea} the operators $R_i$, $1\le i\le 2$,
are invertible operators and the inverse operators, say $S_i$ have the same form as $R_i$. This means that the operators $S_i$, $i=1,2,$, with kernels 
$\displaystyle{\widehat s_1(\xi_1,\xi_2)=\frac{|\xi|^{2n-1}}{p(\xi_1,\xi_2)}}$ and $\widehat s_2(\xi_1,\xi_2)=\widehat s_1(\xi_2,\xi_1)$ respectively, 
can be written as in \eqref{invertible}. 
Finally, setting $$\varphi_i=S_i(\Delta f)*\frac{x_i}{|x|^{3-\a}}$$ for $i=1,2,$ finishes the proof of the Lemma. 
\end{proof}

\subsection{Localization}

In what follows, given a square $Q$, $\varphi_Q$ will denote an infinitely differentiable function supported on $Q$ and such that $\|\varphi_Q\|_\infty\le C$, 
$\|\nabla\varphi_Q\|_\infty\le l(Q)^{-1}$ and $\|\Delta\varphi_Q\|_\infty\le l(Q)^{-2}$.

The localization lemma presented in the following is an extension of Lemma 14 in \cite{cmpt2} for $0<\al<1$.

\begin{lemma}\label{localization}
Let $T$ be a compactly supported distribution in $\Rd$ with growth $\al$, $0<\al<1$, such that $(x_i^{2n-1}/|x|^{2n-1+\al})*T \in L^\infty(\Rd)$ for some $n\in\N$ 
and $1\le i\le 2$. Then $(x_i^{2n-1}/|x|^{2n-1+\al})*\varphi_Q\in L^\infty(\Rd)$ and
$$\left\|\frac{x_i^{2n-1}}{|x|^{2n-1+\al}}*\varphi_QT\right\|_\infty\le C\left(\left\|\frac{x_i^{2n-1}}{|x|^{2n-1+\al}}*T\right\|_{\infty}+G_\al(T)\right),$$ for some positive constant $C$. 
\end{lemma}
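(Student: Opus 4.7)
The plan is to bound $K^i_{\alpha,n}*(\varphi_Q T)(x_0)$ pointwise at each $x_0\in\Rd$ by a constant multiple of $\|K^i_{\alpha,n}*T\|_\infty+G_\alpha(T)$. Setting $\Psi_{x_0}(y):=K^i_{\alpha,n}(x_0-y)\bigl(\varphi_Q(y)-\varphi_Q(x_0)\bigr)$ and first treating $T$ as smooth (extending by density), the starting identity is
\begin{equation*}
K^i_{\alpha,n}*(\varphi_Q T)(x_0)=\varphi_Q(x_0)\,(K^i_{\alpha,n}*T)(x_0)+\langle T,\Psi_{x_0}\rangle.
\end{equation*}
Since $\|\varphi_Q\|_\infty\le C$, the first summand contributes at most $C\|K^i_{\alpha,n}*T\|_\infty$, so the whole problem reduces to estimating $|\langle T,\Psi_{x_0}\rangle|$.

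The subtraction $\varphi_Q(y)-\varphi_Q(x_0)$ tames the singularity of the kernel at $y=x_0$: since $\alpha<1$ and $\|\nabla\varphi_Q\|_\infty\le l(Q)^{-1}$, one has the local bound $|\Psi_{x_0}(y)|\lesssim l(Q)^{-1}|y-x_0|^{1-\alpha}$, which is integrable. To deal with the non-compactly supported tail $-\varphi_Q(x_0)K^i_{\alpha,n}(x_0-y)$ outside $Q$, I would introduce a smooth cutoff $\chi^*$, equal to $1$ on a neighborhood of $Q\cup\{x_0\}$ and supported in a cube $Q^*$ with $l(Q^*)\sim l(Q)+\dist(x_0,Q)$, and rewrite the tail pairing via the identity $\langle T,K^i_{\alpha,n}(x_0-\cdot)\rangle=(K^i_{\alpha,n}*T)(x_0)$, which costs one more $\|K^i_{\alpha,n}*T\|_\infty$ term. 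What remains is the pairing of $T$ with compactly supported pieces on $Q^*$ of pointwise size $l(Q^*)^{-\alpha}$; after a smooth regularization of $K^i_{\alpha,n}$ at scale $\varepsilon\to 0$ and rescaling by $l(Q^*)^\alpha$, these become admissible test functions for the growth condition, yielding the desired bound $\lesssim G_\alpha(T)$.

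The main obstacle is verifying the $H^1$-normalization \eqref{normalization1}: for each such compactly supported function $f$ on $Q^*$ built from $K^i_{\alpha,n}(x_0-\cdot)$ and the cutoffs $\varphi_Q$, $\chi^*$, one needs
\begin{equation*}
\Bigl\|\Delta\partial_j f\ast\tfrac{1}{|y|^{1-\alpha}}\Bigr\|_{H^1(\Rd)}\lesssim l(Q^*)^\alpha,\qquad j=1,2.
\end{equation*}
Because $f$ inherits the singular factor $K^i_{\alpha,n}$, the derivatives $\Delta\partial_j f$ are strongly non-integrable at $y=x_0$, and only the smoothing convolution with $|y|^{\alpha-1}$, combined with the cancellation from $\varphi_Q(y)-\varphi_Q(x_0)$, restores $H^1$-integrability. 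I would carry out this verification through an atomic decomposition of $H^1$ on dyadic annuli around $x_0$ together with the Fourier-side representation of $K^i_{\alpha,n}$ from Lemma \ref{fourier}, thereby reducing matters to standard Calder\'on--Zygmund estimates, in direct parallel with the $\alpha=1$ localization established in \cite{cmpt2}.
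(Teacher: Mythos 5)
Your opening identity and the treatment of the local piece (where $\varphi_Q(y)-\varphi_Q(x_0)$ tames the singularity) match the paper's term $A$, but the handling of the tail has a genuine gap. After introducing $\chi^*$ and rewriting via $\langle T,K^i_{\alpha,n}(x_0-\cdot)\rangle=(K^i_{\alpha,n}*T)(x_0)$, the residual pairing you are left with is $\langle T,\chi^*K^i_{\alpha,n}(x_0-\cdot)\rangle$, and its test function has \emph{no} cancellation at $x_0$. Your claim that these pieces have ``pointwise size $l(Q^*)^{-\alpha}$'' is false near $x_0$, and the $H^1$ normalization \eqref{normalization1} genuinely fails for them: on the Fourier side, $\Delta\partial_j K^i_{\alpha,n}*|y|^{\alpha-1}$ has a symbol comparable to $\xi_i\xi_j/|\xi|^2$, so the convolution is a Calder\'on--Zygmund kernel $\sim|y-x_0|^{-2}$, which is not locally integrable in $\Rd$; the regularizations $\chi^*K^i_{\varepsilon}(x_0-\cdot)$ therefore have $H^1$-normalization blowing up as $\varepsilon\to 0$. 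The cancellation from $\varphi_Q-\varphi_Q(x_0)$, which you correctly invoke for the local term, simply is not present in this residual, and one cannot apply the growth condition to it. (Note also that recombining the three pieces produced by your splitting algebraically returns the original quantity, so the three bounds would have to stand on their own---and this one does not.)

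What is missing is the mechanism the paper uses for the tail, namely Lemma \ref{prelocalization}. That lemma gives an $L^p(\tfrac14 Q)$-bound on $K_1*\varphi_Q T$ in terms of $G_\alpha(T)$, hence a ``good'' Lebesgue point $x_0\in Q$ with $|(K_1*\psi_Q T)(x_0)|\lesssim G_\alpha(T)$. One then never pairs $T$ against $\chi^*K_1^{x_0}$; instead one pairs against the kernel \emph{difference} $(1-\psi_Q)(K_1^{x}-K_1^{x_0})$, and decomposes into dyadic annuli $N_j^*$ around $x$. The difference gives the crucial extra decay $|K_1^{x}-K_1^{x_0}|\lesssim|x-x_0|\,|y-x|^{-(1+\alpha)}$, so that after normalizing by $(2^j l(Q))^{\alpha}$ each annular piece is admissible with a summable factor $2^{-j}$. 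Without that difference (and without knowing that $x_0$ is a good point, which is exactly Lemma \ref{prelocalization}), the annular pieces of $K_1^{x_0}$ alone have no decay after normalization and the dyadic sum diverges. Your proposal contains neither the existence of the good Lebesgue point nor the kernel-difference device, and these are precisely the nontrivial ingredients of the proof.
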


The next lemma states a sufficient condition for a test function to satisfy the normalization conditions in \eqref{normalization1}.

\begin{lemma}\label{nicecondition} Let $f_Q$ be a test function supported on a square $Q$, satisfying $\|\Delta f_Q\|_{L^1(Q)}\le C.$
Then,
$$\left\|\Delta\partial_if_Q*\frac1{|x|^{1-\al}}\right\|_{H^1(\Rd)}\le l(Q)^\al,\,\,\,\mbox{for }\, i=1,2. $$
\end{lemma}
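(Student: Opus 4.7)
The plan is to invoke the Riesz transform characterization of the Hardy space: for $m \in L^1(\Rd)$, $m \in H^1(\Rd)$ if and only if both Riesz transforms $R_j m \in L^1$ ($j=1,2$), with $\|m\|_{H^1} \sim \|m\|_{L^1} + \|R_1 m\|_{L^1} + \|R_2 m\|_{L^1}$. Denote $g^{(k)} := \Delta\partial_k f_Q * |x|^{\alpha-1}$ for $k=1,2$, and set $h := |x|^{\alpha-2} * \Delta f_Q$ (the Riesz potential of order $\alpha$ applied to $\Delta f_Q$, up to a normalizing constant). Since $f_Q$ is a test function, integration by parts gives $\int \Delta f_Q = 0$, and this mean-zero property drives every estimate below. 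It suffices to establish $\|g^{(i)}\|_{L^1}\le C\,l(Q)^\alpha$ and $\|R_j g^{(i)}\|_{L^1}\le C\,l(Q)^\alpha$ for $j=1,2$.

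The key $L^1$ bounds are $\|g^{(k)}\|_{L^1}, \|h\|_{L^1} \le C\, l(Q)^\alpha$, which I would prove by splitting the integration at $2Q$. Using $\partial_k |x|^{\alpha-1} = (\alpha-1)x_k/|x|^{3-\alpha}$,
\[ g^{(k)}(x) = (\alpha-1)\int \frac{(x-y)_k}{|x-y|^{3-\alpha}}\Delta f_Q(y)\,dy. \]
For $x \in 2Q$, Fubini combined with the local integrability of $|z|^{\alpha-2}$ (valid since $\alpha>0$) gives $\int_{2Q}|g^{(k)}(x)|\,dx \le C\,l(Q)^\alpha\|\Delta f_Q\|_{L^1}$. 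For $x \notin 2Q$, placing $x_Q = 0$, the mean-zero condition allows subtracting a constant from the kernel, and the mean value theorem then supplies
\[ \left|\frac{(x-y)_k}{|x-y|^{3-\alpha}}-\frac{x_k}{|x|^{3-\alpha}}\right|\le C\,l(Q)|x|^{\alpha-3}\quad\text{for } y\in Q, \]
yielding $|g^{(k)}(x)|\le C\,l(Q)|x|^{\alpha-3}$ and, in polar coordinates, $\int_{|x|>l(Q)} l(Q)|x|^{\alpha-3}\,dx = C(1-\alpha)^{-1} l(Q)^\alpha$ (finite since $\alpha<1$). The identical argument applied to the kernel $|x|^{\alpha-2}$ yields $\|h\|_{L^1}\le C\,l(Q)^\alpha$.

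To pass from $L^1$ to $H^1$, I would exploit Fourier identities. Since $\widehat{|x|^{\alpha-1}}(\xi)=c|\xi|^{-(1+\alpha)}$ and $\widehat{|x|^{\alpha-2}}(\xi)=c'|\xi|^{-\alpha}$ in $\Rd$,
\[ \widehat{g^{(k)}}(\xi)=-ic\,\xi_k|\xi|^{1-\alpha}\widehat{f_Q}(\xi),\qquad \widehat{h}(\xi)=-c'|\xi|^{2-\alpha}\widehat{f_Q}(\xi), \]
from which $\widehat{R_k h}(\xi) = ic'\xi_k|\xi|^{1-\alpha}\widehat{f_Q}(\xi)$, so $R_k h$ is a constant multiple of $g^{(k)}$. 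Applying the $L^1$ bound for both coordinates $k=1,2$ gives $\|R_k h\|_{L^1} \le C\,l(Q)^\alpha$, and together with $\|h\|_{L^1}\le C\,l(Q)^\alpha$ the Riesz characterization yields $h \in H^1(\Rd)$ with $\|h\|_{H^1}\le C\,l(Q)^\alpha$. A parallel Fourier computation shows that $\widehat{R_j g^{(i)}}$ agrees, up to a constant, with $\widehat{R_iR_j h}$; since the composition $R_iR_j$ is bounded on $H^1$, we have $R_iR_j h \in H^1 \subset L^1$ with norm controlled by $C\,l(Q)^\alpha$, whence $\|R_j g^{(i)}\|_{L^1}\le C\,l(Q)^\alpha$. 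Together with $\|g^{(i)}\|_{L^1}\le C\,l(Q)^\alpha$ this produces $g^{(i)}\in H^1$ with the desired estimate.

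The main obstacle is the far-field $L^1$ bound on $g^{(k)}$: only $L^1$-control of $\Delta f_Q$ is assumed, so one has no pointwise or $L^p$ ($p>1$) control on any derivative of $f_Q$. The decisive ingredient is the cancellation $\int\Delta f_Q=0$, which generates the extra factor of $l(Q)$ needed to transform the naively non-integrable tail of $|x|^{\alpha-1}*\Delta\partial_k f_Q$ into an integrable one. Without this vanishing moment the whole scheme would collapse.
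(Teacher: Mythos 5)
Your proposal is correct, and it differs from the paper's proof in two genuine ways, both of which arguably streamline the argument. For the far-field $L^1$ estimate of $\Delta\partial_i f_Q * |x|^{\alpha-1}$, the paper transfers the Laplacian onto the kernel (leaving $\partial_i f_Q$ behind) and then controls $\|\nabla f_Q\|_{L^1(Q)}$ via Cauchy--Schwarz followed by Maz'ya's inequality $\|\nabla f_Q\|_2 \le C\|\Delta f_Q\|_1$; you instead transfer only the single derivative $\partial_k$ onto the kernel, invoke the cancellation $\int \Delta f_Q = 0$, and run a mean value theorem argument to gain the extra factor $l(Q)|x|^{-1}$ that makes the tail integrable. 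Your route avoids Maz'ya's inequality altogether and stays at the level of a standard kernel cancellation estimate, which is somewhat more self-contained. For the Riesz transform bound, the paper uses the convolution identity $\frac{x_j}{|x|^3} * \frac{1}{|x|^{1-\alpha}} = c\frac{x_j}{|x|^{2-\alpha}}$ and essentially repeats the $L^1$ estimate with the new kernel; you instead observe on the Fourier side that $g^{(k)}$ is a constant multiple of $R_k h$ where $h = \Delta f_Q * |x|^{\alpha-2}$, establish $h\in H^1$ directly from the $L^1$ bounds on $h$ and $g^{(k)}$, and then conclude $g^{(i)} = cR_i h \in H^1$ by the boundedness of Riesz transforms on $H^1$. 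This is a slicker closing step, at the price of invoking the $H^1$-boundedness of Riesz transforms as a black box rather than producing explicit $L^1$ estimates; the paper's version is closer to the explicit computations it needs elsewhere (e.g.\ in Lemma \ref{prelocalization}, where the analogous Riesz transform terms must be treated by hand anyway). Both arguments rely in the end on $\|\Delta f_Q\|_{L^1}\le C$, $\alpha\in(0,1)$, and the compact support of $f_Q$, and both yield the required bound.
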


\begin{proof}[Proof of Lemma \ref{nicecondition}]
We have to show that for $i=1,2,$
\begin{equation}\label{L1}
\left\|\Delta\partial_if_Q*\frac1{|x|^{1-\al}}\right\|_{L^1(\Rd)}\le l(Q)^\al
\end{equation}
and
\begin{equation}\label{L1riesz}
\left\|R_j(\Delta\partial_if_Q)*\frac1{|x|^{1-\al}}\right\|_{L^1(\Rd)}\le l(Q)^\al,\;\;\;j=1,2,
\end{equation}
where $R_j$, $j=1,2$, is the $j$-th component of the Riesz operator with kernel $x_j/|x|^3$.

\begin{equation*}
\begin{split}
\left\|\Delta\partial_if_Q*\frac1{|x|^{1-\al}}\right\|_{L^1(\Rd)}&=\left\|\Delta\partial_if_Q*\frac1{|x|^{1-\al}}\right\|_{L^1(2Q)}+\left\|\Delta\partial_if_Q*\frac1{|x|^{1-\al}}\right\|_{L^1((2Q)^c)}\\*[7pt] &= A+B.
\end{split}
\end{equation*}

We estimate first the term $A$. By taking one derivative from $f$ to the kernel, using Fubini and the fact that $\|\Delta f_Q\|_{L^1}\le C$, we obtain 
\begin{equation*}
\begin{split}
A=\left\|\Delta\partial_if_Q*\frac1{|x|^{1-\al}}\right\|_{L^1(2Q)}&\le\int_{2Q}\int_Q\frac{|\Delta f_Q(x)|}{|x-y|^{2-\al}}dxdy\le Cl(Q)^\al.
\end{split}
\end{equation*}

To estimate term $B$ we bring the Laplacian from $f_Q$ to the kernel $|x|^{\al-1}$ and then use Fubini, the Cauchy-Schwartz inequality and a well 
known inequality of Maz'ya,  \cite[1.1.4, p. 15]{mazya},  and \cite[1.2.2, p. 24]{mazya} , stating that
$\|\nabla f_Q\|_2\le C\|\Delta f_Q\|_1$. Hence,

\begin{equation*}
\begin{split}
\left\|\Delta\partial_if_Q*\frac1{|x|^{1-\al}}\right\|_{L^1((2Q)^c)}&\le C \int_{(2Q)^c}\int_Q\frac{|\partial_i f_Q(x)|}{|x-y|^{3-\al}}dxdy \\*[7pt]&\le C\; \|\nabla f_Q\|_{L^1(Q)}\;l(Q)^{\al-1}\le C\; \|\nabla f_Q\|_2 \;l(Q)^\al\le C\;l(Q)^\al,
\end{split}
\end{equation*}
the last inequality coming from the hypothesis $\|\Delta f_Q\|_{L^1}\le C$.
This finishes the proof of  \eqref{L1}. To prove \eqref{L1riesz}, we remark that, 
\begin{equation}\label{remark}
 \frac{x_j}{|x|^3}*\frac1{|x|^{1-\al}}=c\frac{x_j}{|x|^{2-\al}},
\end{equation} for some constant $c$. This can be seen by computing the Fourier transform of the above kernels. Using this fact, we obtain that

\begin{equation*}
\begin{split}
\left\|R_j(\Delta\partial_if_Q)*\frac1{|x|^{1-\al}}\right\|_{L^1(\Rd)}&=\left\|\Delta\partial_if_Q*\frac{x_j}{|x|^3}*\frac1{|x|^{1-\al}}\right\|_{L^1(\Rd)}\\*[7pt]&=c\left\|\Delta\partial_if_Q*\frac{x_j}{|x|^{2-\al}}\right\|_{L^1(\Rd)}\le Cl(Q)^\al,
\end{split}
\end{equation*}
where the last integral can be estimated in an analogous way as \eqref{L1}. This finishes the proof of \eqref{L1riesz} and the lemma.
\end{proof}

For the proof of Lemma \ref{localization} we need the following preliminary lemma.

\begin{lemma}\label{prelocalization}
 Let $T$ be a compactly supported distribution in $\Rd$ with growth $\al$. Then, for each coordinate $i$, the distribution $(x_i^{2n-1} / |x|^{2n-1+\al})
* \varphi_Q T$ is an integrable function in the interior of $\frac 1 4 Q$ and 
$$
\int_{\frac 1 4 Q}\left|\left(\frac{x_i^{2n-1}}{|x|^{2n-1+\al}}*\varphi_QT\right)(y)\right|dy\leq C \, G_\al(T)\;l(Q)^2,
$$
where  $C$ is a positive constant.
\end{lemma}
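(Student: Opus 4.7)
The plan is to pass via duality and Fubini from the $L^1$ statement to a pairing of $T$ with a smooth compactly supported test function, to which the growth condition can then be applied through Lemma \ref{nicecondition}. Since $0<\alpha<1$, the kernel $K^i_{\alpha,n}$ is locally integrable on $\Rd$, so $K^i_{\alpha,n}*\varphi_QT$ is a well-defined distribution. By the $L^1$-$L^\infty$ duality and the density of smooth compactly supported functions, the claimed bound on the interior of $\tfrac14 Q$ is equivalent to the estimate
\begin{equation*}
\bigl|\langle K^i_{\alpha,n}*\varphi_Q T,\,g\rangle\bigr|\le C\,G_\alpha(T)\,l(Q)^2
\end{equation*}
for every $g\in\cc_0^\infty$ compactly supported in the interior of $\tfrac14 Q$ with $\|g\|_\infty\le 1$.

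Using the oddness $K^i_{\alpha,n}(-z)=-K^i_{\alpha,n}(z)$ and Fubini (justified by local integrability of the kernel and smoothness of $g$), this pairing equals $-\langle T,F\rangle$, where $F:=\varphi_Q H_g$ and $H_g(x):=(K^i_{\alpha,n}*g)(x)$. Since $g\in\cc_0^\infty$ and $K^i_{\alpha,n}\in L^1_{\mathrm{loc}}$, $H_g$ is smooth and $F\in\cc_0^\infty(Q)$. The growth condition combined with Lemma \ref{nicecondition} then yields $|\langle T,F\rangle|\le C\,G_\alpha(T)\,l(Q)^\alpha\,\|\Delta F\|_{L^1(Q)}$, so the problem reduces to establishing $\|\Delta F\|_{L^1(Q)}\le C\,l(Q)^{2-\alpha}\|g\|_\infty$.

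Expanding $\Delta F=(\Delta\varphi_Q)H_g+2\nabla\varphi_Q\cdot\nabla H_g+\varphi_Q\Delta H_g$ via Leibniz, the first two summands are controlled directly. The local integrability of $|z|^{-\alpha}$ and $|z|^{-\alpha-1}$ on $\Rd$, valid because $\alpha<1$, gives the pointwise bounds $\|H_g\|_{L^\infty(Q)}\le C\,l(Q)^{2-\alpha}\|g\|_\infty$ and $\|\nabla H_g\|_{L^\infty(Q)}\le C\,l(Q)^{1-\alpha}\|g\|_\infty$; combined with the normalizations $\|\Delta\varphi_Q\|_\infty\le l(Q)^{-2}$, $\|\nabla\varphi_Q\|_\infty\le l(Q)^{-1}$ and $|Q|=l(Q)^2$, these produce contributions of the required size $l(Q)^{2-\alpha}\|g\|_\infty$.

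The main obstacle is the third summand $\|\varphi_Q\Delta H_g\|_{L^1(Q)}$: the kernel $\Delta K^i_{\alpha,n}$ is homogeneous of degree $-\alpha-2$ and fails to be locally integrable on $\Rd$, so a pointwise bound on $\Delta H_g$ is not directly available and the strategy used for the first two terms breaks down. To control this term one has to exploit the Fourier-analytic structure from Lemma \ref{fourier}: the identity $\widehat{K^i_{\alpha,n}}(\xi)=c\,\xi_i p(\xi)/|\xi|^{2n+1-\alpha}$ factors $K^i_{\alpha,n}*g$ as a Calder\'on-Zygmund operator of order zero applied to the Riesz potential $I_{2-\alpha}(g)=c|x|^{-\alpha}*g$, allowing one to trade the offending Laplacian for derivatives of Riesz potentials of $g$ that are under control. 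This last step is where the technical heart of the argument lies.
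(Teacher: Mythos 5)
Your reduction via Lemma \ref{nicecondition} to the bound $\|\Delta F\|_{L^1(Q)}\le C\,l(Q)^{2-\alpha}\|g\|_\infty$ is the fatal step, and it is precisely the pitfall the paper warns against: the proof in the text explicitly observes that for $F=\varphi_Q(K_1*\psi)$ one \emph{cannot} invoke Lemma \ref{nicecondition}, and checks the $H^1$ normalization \eqref{normalization1} directly instead. Concretely, the third Leibniz term $\varphi_Q\Delta H_g$ equals $\varphi_Q\,(K^i_{\alpha,n}*\Delta g)$, whose $L^1(Q)$-norm scales like a fractional derivative of $g$ of order $\alpha$: the Fourier multiplier of $\Delta K^i_{\alpha,n}$ is, up to a zeroth-order factor, $|\xi|^\alpha$, so if $g$ oscillates at frequency $N\gg l(Q)^{-1}$ then $\|\varphi_Q\Delta H_g\|_{L^1(Q)}$ grows like $N^\alpha l(Q)^2$ while $\|g\|_\infty$ stays bounded. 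Hence the inequality you reduce to is simply false, and your acknowledged obstacle is not a technical subtlety you can postpone -- it is a sign that the reduction itself must be abandoned.

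The Fourier fix you gesture at does not rescue the reduction as stated, because the gain of $2-\alpha$ derivatives that makes the Fourier argument close is built into the $H^1$ normalization: it is the extra convolution with $|x|^{-(1-\alpha)}$ in \eqref{normalization1} that turns $\Delta\partial_i K^i_{\alpha,n}*(\cdot)*|x|^{-(1-\alpha)}$ into a zeroth-order Calder\'on--Zygmund operator. Once you pass through Lemma \ref{nicecondition}, that smoothing factor is no longer available and the order-$\alpha$ derivative survives. So you must verify \eqref{normalization1} for $F$ directly, as the paper does (see the decomposition into $A_1,\dots,A_6$ and the treatment of $A_2$). A second, related point: after the Fourier factorization one is left with a CZ operator applied to the test function, and such operators are bounded on $L^q$ only for $1<q<\infty$, not $L^\infty$. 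This is why the paper tests against $\psi$ normalized in $L^q$ with $2<q<\infty$, proving the stronger $L^p(2Q)$ bound for $1\le p<2$, and then derives the $L^1$ estimate by H\"older. Your $L^\infty$-duality setup can in principle be repaired (since $g$ is supported in $\tfrac14 Q$ one has $\|g\|_{L^q}\lesssim l(Q)^{2/q}\|g\|_\infty$), but as written it aims for an endpoint that the CZ machinery does not give you for free.
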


For $\al=1$ the proof Lemma \ref{prelocalization} can be found in \cite{cmpt2}. In $\mathbb{R}^d$, for $n=1$ and $0<\al<d$, the proof is given in \cite{tams}.  
Although the scheme of our proof is the same as in the papers cited above, several difficulties arise due to the fact that we are considering more general kernels, namely kernels involving non-integer indexes $\al$ and $n\in\N$.

For the rest of the section we will assume, without loss of generality, that $i=1$ and we will write $K_1(x)=x_1^{2n-1}/|x|^{2n-1+\al}$.

\begin{proof}[Proof of Lemma \ref{prelocalization}]

We will 
prove that $K_1* \varphi_Q T$~is in
$L^{p}(2Q)$ for each $p$ in $1\le p<2.$ Indeed, fix any $q$ satisfying $2<q<\infty$ and call $p$ the dual exponent, so that $1<p<2$. We need to estimate the action of
$K_1 * \varphi_Q T$ on functions $\psi \in \cc^\infty_0(2Q)$ in
terms of $\|\psi\|_{q} $. We clearly have
$$
\langle K_1 * \varphi_Q T, \psi\rangle = \langle T, \varphi_Q(K_1 * \psi)\rangle.
$$
We claim that, for an appropriate
positive constant $C $, the test function
\begin{equation*}
f_Q=\frac{\varphi_Q(K_1 * \psi)}{C \,l(Q)^{\frac{2}{p}-\al} \|\psi\|_{q}}
\end{equation*}
satisfies the normalization inequalities \eqref{normalization1} in
the definition of $G_\al(T)$.  Once this is proved, by the definition of $G_\al(T)$ we get that $|\langle K_1 * \varphi_Q T, \psi\rangle | \le C\,
l(Q)^{\frac{2}{p}}\|\psi\|_{q} \,G_\al(T),$
and therefore $\|K_1 * \varphi_Q T \|_{L^{p}(2Q)} \le C\,
l(Q)^{\frac{2}{p}}G_\al(T).$ Hence
\begin{equation*}
\begin{split}
\frac{1}{|\frac{1}{4}Q|}\int_{\frac{1}{4} Q} |(K_1 * \varphi_Q
T)(x)|\,dx &\le 16\frac{1}{|Q|}\int_Q |(K_1 * \varphi_Q
T)(x)|\,dx \\*[7pt]
& \le 16\left(\frac{1}{|Q|}\int_Q |(K_1
* \varphi_Q T)(x)|^{p} \,dx\right)^{\frac{1}{p}}\\*[7pt]
& \le C\,G_\al(T),
\end{split}
\end{equation*}
which proves Lemma \ref{prelocalization}.

Notice that since $\Delta(\varphi_Q(K_1 * \psi)$ is not in $L ^1(Q),$ to prove the claim, we cannot use Lemma \ref{nicecondition}.  Therefore we have to check that, for $i=1,2,$
\begin{equation*}
\left\|\Delta\partial_if_Q*\frac1{|x|^{1-\al}}\right\|_{H^1(\Rd)}\le C\;l(Q)^\al.
\end{equation*}
This is equivalent to checking conditions
\begin{equation}\label{normaL1}
\left\|\Delta\partial_i\left(\varphi_Q(K_1*\psi)\right)*\frac1{|x|^{1-\al}}\right\|_{L^1(\Rd)}\le C\;l(Q)^{\frac2p}\|\psi\|_q
\end{equation}
and
\begin{equation}\label{normaL1riesz}
\left\|R_j(\Delta\partial_i\left(\varphi_Q(K_1*\psi)\right))*\frac1{|x|^{1-\al}}\right\|_{L^1(\Rd)}\le C\;l(Q)^{\frac2p}\|\psi\|_q
\end{equation}
for $i,j=1,2$.

By Fubini and H\"older,
\begin{equation}\label{pre}
\int_Q|(K_1*\psi)(y)|dy\le \int_{2Q}|\psi(z)|\int_Q\frac{dydz}{|z-y|^{\alpha}}\le C\|\psi\|_ql(Q)^{\frac{2}{p}+2-\al}.
\end{equation}
In the same way one can obtain 
\begin{equation}\label{pre2}
\int_Q|(\partial_iK_1*\psi)(y)|dy\le \int_{2Q}|\psi(z)|\int_Q\frac{dydz}{|z-y|^{1+\alpha}}\le C\|\psi\|_ql(Q)^{\frac{2}{p}+1-\al},\;i=1,2,.
\end{equation}
To check  \eqref{normaL1} we compute first the $L^1$-norm in $(2Q)^c$ by bringing all derivatives to the kernel $|x|^{\al-1}$, using Fubini and \eqref{pre}.  Then
\begin{equation}\label{outside}
\begin{split}
\left\|\Delta\partial_i\left(\varphi_Q(K_1*\psi)\right)*\frac1{|x|^{1-\al}}\right\|_{L^1((2Q)^c)}&\le
C\int_{Q}|(K_1*\psi)(y)|\int_{(2Q)^c}\frac{dxdy}{|y-x|^{4-\al}}\\*[7pt]&\le C\|\psi\|_ql(Q)^{\frac{2}{p}}.
\end{split}
\end{equation}
Now we are left to compute the $L^1$-norm in $2Q$ of the integral in \eqref{normaL1}. For this, we bring the Laplacian to the kernel $|x|^{\al-1}$. 
Since for $i=1,2,$ we clearly have $\partial_i\left(\varphi_Q(K_1*\psi)\right))=\partial_i\varphi_Q(K_1*\psi)+\varphi_Q\partial_i(K_1*\psi)$,  adding and substracting 
some terms to get integrability, we get

\begin{equation}\label{inside}
\begin{split}
\left\|\Delta\partial_i\left(\varphi_Q(K_1*\psi)\right)*\frac1{|x|^{1-\al}}\right\|_{L^1(2Q)} &\le
C\int_{2Q}\left|\int_Q\frac{(\varphi_Q(y)-\varphi_Q(x))(\partial_iK_1*\psi)(y)}{|y-x|^{3-\al}}dy\right|dx\\*[7pt] &+
C\int_{Q}|\varphi_Q(x)|\left|\left(\Delta\partial_i K_1*\psi*\frac1{|y|^{1-\al}}\right)(x)\right|dx\\*[7pt] &+
C\int_Q\left|\int_{Q^c}\frac{\varphi_Q(x)(\partial_iK_1*\psi)(y)}{|y-x|^{3-\al}}dy\right|dx\\*[7pt] &+ 
C\int_{2Q}\left|\int_Q\frac{(\partial_i\varphi_Q(y)-\partial_i\varphi_Q(x))(K_1*\psi)(y)}{|y-x|^{3-\al}}dy\right|dx\\*[7pt] &+
C\int_{Q}|\partial_i\varphi_Q(x)|\left|\left(\Delta K_1*\psi*\frac1{|y|^{1-\al}}\right)(x)\right|dx\\*[7pt] &+ 
C\int_Q\left|\int_{Q^c}\frac{\partial_i\varphi_Q(x)(K_1*\psi)(y)}{|y-x|^{3-\al}}dy\right|dx\\*[7pt] &= A_1+A_2+A_3+A_4+A_5+A_6,
\end{split}
\end{equation}
the last identity being a definition for $A_l$, $1\le l\le 6$.

The mean value theorem, Fubini and \eqref{pre2}, give us

$$A_1\le C l(Q)^{-1}\int_{Q}|(\partial_iK_1*\psi)(y)|\int_{2Q}\frac1{|y-x|^{2-\al}}dx\;dy\le C\|\psi\|_ql(Q)^{\frac{2}{p}}.$$
  
The same reasoning but using \eqref{pre} instead of \eqref{pre2}, give us  $A_4\le C\|\psi\|_ql(Q)^{\frac{2}{p}}.$

We deal now with term $A_2$. By Lemma \ref{fourier}, taking Fourier transform of the convolution $\Delta\partial_i K_1*\psi*\frac1{|y|^{1-\al}}$, one sees that

$$\widehat{\left(\Delta\partial_i K_1*\psi*\frac1{|y|^{1-\al}}\right)}(\xi)=c\frac{\xi_i\xi_1p(\xi_1,\xi_2)}{|\xi|^{2n}}\widehat{\psi}(\xi).$$
Therefore, since the homogeneous polynomial $\xi_i\xi_1p(\xi_1,\xi_2)$, of degree $2n$, has no non-vanishing zeros, by \cite[Theorem 4.15, p.82]{duandikoetxea}, we obtain that 
$$\left(\Delta\partial_i K_1*\psi*\frac1{|y|^{1-\al}}\right)(x)=c\psi+cS_0(\psi)(x),$$
for some constant $c$ and some smooth homogeneous Calder\'on-Zygmund operator $S_0$.

Now using H\"older's inequality and the fact that Calder\'on-Zygmund operators preserve $L^q(\Rd)$, $1<q<\infty$, we get $A_2\le Cl(Q)^{2/p}\|\psi\|_q$.

To estimate $A_3$, notice that $\varphi_Q$ is supported on $Q$, therefore

\begin{equation*}
\begin{split}
A_3&\le C \int_Q\left|\int_{3Q\setminus Q}\frac{(\varphi_Q(x)-\varphi_Q(y))(\partial_iK_1*\psi)(x)}{|y-x|^{3-\al}}dy\right| dx
\\*[7pt] & +C \int_Q|\varphi_Q(x)|\int_{(3Q)^c}\frac{\left|(\partial_iK_1*\psi)(y)\right|}{|y-x|^{3-\al}}dy\;dx=A_{31}+A_{32}.
\end{split}
\end{equation*}

For $A_{31}$ we use the mean value theorem and argue as in the estimate of $A_1$. We deal now with $A_{32}$:
\begin{equation*}
 \begin{split}
   A_{32}&\le C\int_Q\int_{(3Q)^{c}}\frac1{|y-x|^{3-\al}}\int_{2Q}\frac{|\psi(z)|}{|z-y|^{1+\al}}dz\;dy\;dx\\*[7pt]&
\le C l(Q)^{-1-\al}\|\psi\|_1\int_Q\int_{(3Q)^{c}}\frac1{|y-x|^{3-\al}}dy\;dx\\*[7pt]&\le Cl(Q)^{-1-\al}\|\psi\|_ql(Q)^{\frac 2 p}l(Q)^{1+\al}=C l(Q)^{\frac{2}{p}}\|\psi\|_q,
 \end{split}
\end{equation*} using H\"older's inequality.
To estimate terms $A_5$ and $A_6$, one argues in a similar manner, we leave the details to the reader. This finishes the proof of \eqref{normaL1}.

We are still left with checking that condition \eqref{normaL1riesz} holds. Notice that by \eqref{remark},
\begin{equation*}
 \begin{split}
  \left\|R_j(\Delta\partial_i\left(\varphi_Q(K_1*\psi)\right))*\frac1{|x|^{1-\al}}\right\|_{L^1(\Rd)}&
   =c\left\|\Delta\partial_i\left(\varphi_Q(K_1*\psi)\right)*\frac{x_j}{|x|^{2-\al}}\right\|_{L^1(\Rd)}\\*[7pt]&=B_1+B_2,
 \end{split}
\end{equation*}
where $B_1$ and $B_2$ denote the above $L^1$ norm in $(2Q)^c$ and in $2Q$ respectively. To estimate $B_1$ we transfer all derivatives to the kernel $x_j/|x|^{2-\al}$ 
and argue as in \eqref{outside}. The estimate of $B_2$ follows the same reasoning as \eqref{inside}.
\end{proof}

For the reader's convenience, we repeat the main points of the proof of the localization lemma, for more details see \cite{cmpt2}.

\begin{proof}[Proof of Lemma \ref{localization}]

Let $x\in(\frac32 Q)^c$. Since $|(K_1*\varphi_QT)(x)|=l(Q)^{-\al}|\langle T, l(Q)^\al\varphi_Q(y)K_1(x-y)\rangle|,$
by \eqref{ourgrowth} and Lemma \ref{nicecondition}, the required estimate of the $L^\infty-$norm of the function $K_1*\varphi_QT$ is equivalent to checking that 
$f_Q(y)=l(Q)^{\a}\varphi_Q(y)K_1(x-y)$ satisfies $\|\Delta f_Q\|_{L^1(Q)}\le C$, which is easily seen to hold for this case.
 
If $x\in\frac32 Q$, the boundedness of $\varphi_Q$ and $T*K_1$ implies that
$$|(K_1*\varphi_QT)(x)|\le |(K_1*\varphi_QT)(x)-\varphi_Q(x)(K_1*T)(x)|+\|\varphi_Q\|_\infty\|K_1*T\|_\infty.$$
We consider now $\psi_Q\in\cc_0^\infty(\Rd)$ such that $\psi\equiv 1$ in $2Q$, $\psi\equiv 0$ in $(4Q)^c$, $\|\psi_Q\|_\infty\le C$, 
$\|\nabla\psi_Q\|_\infty\le Cl(Q)^{-1}$ and $\|\Delta\psi_Q\|_\infty\le Cl(Q)^{-2}$. Set $K_1^x(y)=K_1(x-y)$. Then, 
\begin{equation}
\label{oldfor}
\begin{split}
 |(K_1*\varphi_QT)(x)-\varphi_Q(x)(K_1*T)(x)|&\leq|\langle T,\psi_Q(\varphi_Q-\varphi_Q(x))K_1^x\rangle|\\*[7pt]&
 +\|\varphi_Q \|_{\infty}|\langle T,(1-\psi_Q)K_1^x\rangle|=A+B.
\end{split}
\end{equation}

In fact, for the first term in the right hand side of \eqref{oldfor} to make sense, one needs to resort to a standard regularization process, whose details may be found in \cite[Lemma 12]{mpv2} for example. 

The estimate of the term $A$ is a consequence of the $\al-$growth of the distribution (see \eqref{ourgrowth}) and Lemma \ref{nicecondition}, 
because the mean value theorem implies that $f_Q=l(Q)^\al\psi_Q(\varphi_Q-\varphi_Q(x))K_1^x$ satisfies $\|\Delta f_Q\|_1\le C$.

We turn now to $B$. By Lemma \ref{prelocalization}, there exists a Lebesgue point of $K_1*\psi_QT$, $x_0\in Q$, such that $|(K_1*\psi_QT)(x_0)|\le CG_\al(T)$. 
Then $|(K_1*(1-\psi_Q)T)(x_0)|\le C(\|K_1*T\|_\infty+G_\al(T))$, which implies
$$B\le C|\langle T,(1-\psi_Q)(K_1^x-K_1^{x_0})\rangle|+C(\|K_1*T\|_\infty+G_\al(T)).$$
To estimate $|\langle
T,(1-\psi_Q)(K_1^x-K_1^{x_0})\rangle|$, we decompose
$\Rd\setminus \{x\}$ into a union of rings $$N_j=\{z\in
\Rd:2^j\,l(Q)\leq|z-x|\leq 2^{j+1}\,l(Q)\},\quad j\in\mathbb{Z},$$
and consider functions $\varphi_j$ in ${\mathcal
C}^\infty_0(\Rd)$, with support contained in
$$N^*_j=\{z\in
\Rd:2^{j-1}\,l(Q)\leq|z-x|\leq 2^{j+2}\,l(Q)\},\quad
j\in\mathbb{Z},$$ such that $\|\varphi_j\|_\infty\le C$, $\|\nabla\varphi_j\|_\infty\leq C 
\,(2^j\,l(Q))^{-1}$, $\|\Delta\varphi_j\|_{\infty}\le C\,(2^j\,l(Q))^{-2}$ and $\sum_j\varphi_j=1$ on
$\Rd\setminus\{x\}$. Since $x\in\frac 3 2 Q$  the smallest ring
$N^*_j$ that intersects $(2Q)^c$ is $N^*_{-3}$. Therefore  we have
\begin{equation*}
\begin{split}
 |\langle T,(1-\psi_Q)(K_1^{x}-K_1^{x_0})\rangle|
 &=\left|\left\langle T,\sum_{j\geq -3}\varphi_j(1-\psi_Q)(K_1^{x}-K_1^{x_0})\right\rangle\right|\\*[7pt]
 &\leq\left|\left\langle T,\sum_{j\in I}\varphi_{j}(1-\psi_Q)(K_1^{x}-K_1^{x_0})\right\rangle \right|\\*[7pt]
&\quad+\sum_{j\in J}|\langle T,\varphi_{j}(K_1^{x}-K_1^{x_0})\rangle|,
\end{split}
\end{equation*}
where $I$ denotes the set of indices $j\geq -3$ such that the
support of $\varphi_j$ intersects $4Q$  and $J$ denotes the remaining
indices, namely those $j \geq -3 $ such that $\varphi_j$ vanishes
on $4Q$. Notice that the cardinality of $I$ is bounded by a
positive constant.

Set
$$g =C\,l(Q)^\al\sum_{j\in I}\varphi_j(1-\psi_Q)\,(K_1^{x}-K_1^{x_0}),$$
and for $j\in J$
$$g_j=C\,2^j(2^{j}\,l(Q))^\al\,\varphi_j\,(K_1^{x}-K_1^{x_0}).$$
We leave it to the reader to verify that the test functions $g$ and $g_j$, $j\in J$,
satisfy the normalization inequalities \eqref{normalization1} in
the definition of $G_\al(T)$ for an appropriate choice of the (small)
constant $C$ (In fact one can check that the condition in Lemma \ref{nicecondition} holds for these functions). Once this is available, using the $\al$ growth
condition of $T$ we obtain
\begin{equation*}
\begin{split}
 |\langle T,(1-\psi_Q)(K_1^{x}-K_1^{x_0})\rangle |&\leq C l(Q)^{-\al}|\langle T,g\rangle|+ C \sum_{j\in J} 2^{-j}(2^jl(Q))^{-\al}|\langle T,g_j\rangle |\\*[7pt]
 &\leq C\,G_\al(T) + C\sum_{j\geq -3}2^{-j}\,G_\al(T)\leq C\,G_\al(T),
\end{split}
\end{equation*}
which completes the proof of Lemma \ref{localization}.
\end{proof}

\section{Relationship between the capacities $\ga$ and non linear potentials}\label{secoutline}

This section will complete the proof of Theorem \ref{main} by showing the equivalence between the capacities $\gamma_{\a,+}^n$ and $C_{\frac23(2-\a).\frac 3 2}$.

For our purposes, the description of Riesz capacities in terms of Wolff potentials is more useful than the definition of $C_{s,p}$ in \eqref{wolffcap}.  
The Wolff potential of a positive Radon measure $\mu$ is defined by
$$W^\mu_{s,p}(x)=\int_0^\infty\left(\frac{\mu(B(x,r))}{r^{2-sp}}\right)^{q-1}\frac{dr}{r},\;\;x\in\Rd,$$
The Wolff Energy of $\mu$ is $$E_{s,p}(\mu)=\int_{\Rd}W^\mu_{s,p}(x)d\mu(x).$$
A well known theorem of Wolff (see \cite{adamshedberg}, Theorem 4.5.4, p. 110) asserts that
\begin{equation}\label{wolffineq}
C^{-1}\sup_\mu\frac1{E_{s,p}(E)^{p-1}}\le C_{s,p}(E)\le C\sup_\mu\frac1{E_{s,p}(E)^{p-1}},
\end{equation}
the supremum taken over the probability measures $\mu$ supported on $E$. Here $C$ stands for a positive constant depending only on $s$, $p$ and the dimension.

To understand the relationship between the capacities $\ga$ and non linear potentials, we need to recall the 
characterization of these capacities in terms of the symmetrization method.

Let $\mu$ be a positive measure and $0<\al<1$. For $x\in\Rd$ set,
$$p_{\al,n}^2(\mu)(x)=\iint p_{\al,n}(x,y,z)d\mu(y)d\mu(z),$$
$$M_\al\mu(x)=\sup_{r>0}\frac{\mu(B(x,r))}{r^\al}$$ and
$$U_{\al,n}^\mu(x)=M_\al\mu(x)+p_{\al,n}^2(\mu)(x).$$
We denote  the energy associated to this last potential by $$\mathcal{E}_{\a,n}(\mu)=\int_{\Rd}U_{\al,n}^\mu(x)d\mu(x).$$

Notice that Corollary \ref{permalfa} states that for any $n\in\N$, given three distinct points $x,y,z\in\Rd$, $p_\a^n(x,y,z)\approx p_\a^1(x,y,z)$. Hence, for any $n\in\N$
\begin{equation}\label{comparabilitympv}
{\mathcal E}_{\a,n}(\mu)\approx \mathcal{E}_{\a,1}(\mu).
\end{equation}

Recall from \cite[Lemma 4.1]{mpv},  that for a compact set $K\subset\Rd$ and $0<\a<1$, $$\displaystyle{\gamma_{\a,+}^1(K)\approx\sup_{\mu}\frac1{\mathcal{E}_{\a,1}(\mu)}},$$ 
the supremum taken over the probability measures $\mu$ supported on $K$.
Adapting the proof of Lemma 4.1 in \cite{mpv} to our situation (using the reproduction formula from Lemma \ref{reproductionformula} and \eqref{ourgrowth}), we get that
$$\gamma_{\a,+}^n(K)\approx\sup_{\mu}\frac1{\mathcal{E}_{\a,n}(\mu)},$$ where the supremum is taken over the probability measures $\mu$ supported on $K$.

The explanation given in Step 1 of section \ref{secsketch} implies that $$\gamma_{\a}^n(K)\approx\gamma_{\a,+}^n(K),$$ hence we deduce that $$\gamma_{\a}^n(K)\approx \sup_{\mu}\frac1{\mathcal{E}_{\a,1}(\mu)}.$$
Lemma 4.2 in \cite{mpv} shows that for any positive Radon measure $\mu$, the energies $\mathcal{E}_{\a,1}(\mu)$ and $E_{\frac 2 3(2-\a),\frac 3 2}(\mu)$ are comparable.
Now, Wolff's inequality \eqref{wolffineq}, with $s=2(2-\a)/3$ and $p=3/2$, (see the proof of the main Theorem in \cite[p. 221]{mpv}) finishes the proof of Theorem \ref{main}.

\section{Rectifiability and $L^2$-boundedness of $T_n$}\label{rect}

Recalling \eqref{permdef} and \eqref{permdef2} for any Borel measure $\mu$ we define 
\begin{equation}
\label{permtriple}
p_{1,n}(\mu)=\iiint p_{1,n}(x,y,z) d \mu (x)d \mu (y) d \mu (z).
\end{equation}
The following lemma relates the finiteness of $p_{1,n}$ to the $L^2(\mu)$-boundedness of the operator $T_n$
\begin{lemma} 
\label{mmvcl2}
Let $\mu$ be a continuous positive Radon measure in $\Rn$ with linear growth. If the operator $T_n$ is bounded in $L^2(\mu)$ then there exists a constant $C$ such 
that for any ball $B$, $$\iiint_{B^3}p(x,y,z) d \mu (x)d \mu (y)d \mu (z) \leq C \diam (B).$$
\end{lemma}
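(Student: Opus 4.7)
\medskip

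The plan is to mimic the classical Mattila--Melnikov--Verdera symmetrization argument, working one coordinate at a time and then summing, and using Proposition \ref{posperm}(i) to justify passing to a limit. For each $i=1,\dots,d$ and each $\varepsilon>0$, let $T^{i,\varepsilon}_n$ denote the truncated operator, so that the $L^2(\mu)$-boundedness of $T_n$ gives
\[
I^i_\varepsilon := \int_B |T^{i,\varepsilon}_n(\chi_B\mu)(x)|^2\,d\mu(x) \le C\,\mu(B) \le C\,\diam(B),
\]
with the last inequality coming from linear growth. Expanding the square and using Fubini,
\[
I^i_\varepsilon = \iiint_{\{|x-y|>\varepsilon,\,|x-z|>\varepsilon\}\cap B^3} K^i_{1,n}(x-y)\,K^i_{1,n}(x-z)\,d\mu(x)d\mu(y)d\mu(z).
\]

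Next I would split this integral according to whether $|y-z|>\varepsilon$ or $|y-z|\le\varepsilon$, writing $I^i_\varepsilon = M^i_\varepsilon + E^i_\varepsilon$. On the symmetric region $S_\varepsilon := \{(x,y,z)\in B^3: \min(|x-y|,|x-z|,|y-z|)>\varepsilon\}$, I rename variables to exploit the $\mathfrak{S}_3$--symmetry of the measure $\mu\otimes\mu\otimes\mu$ and conclude that each of the three products appearing in the definition \eqref{permdef} of $p^i_{1,n}$ integrates to the same quantity; hence
\[
M^i_\varepsilon = \frac{1}{3}\iiint_{S_\varepsilon} p^i_{1,n}(x,y,z)\,d\mu(x)d\mu(y)d\mu(z).
\]
Summing over $i$ yields $\sum_i M^i_\varepsilon = \tfrac{1}{3}\iiint_{S_\varepsilon} p_{1,n}\,d\mu^3$, which is nonnegative by Proposition \ref{posperm}(i).

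The error term is controlled by brute force: on $\{|x-y|>\varepsilon,\,|y-z|\le\varepsilon\}$ one has $|x-z|\ge\tfrac12|x-y|$, so $|K^i_{1,n}(x-y)K^i_{1,n}(x-z)|\lesssim|x-y|^{-2}$. Fubini then bounds $|E^i_\varepsilon|$ by
\[
C\int_B d\mu(y)\,\mu(B(y,\varepsilon))\int_{|x-y|>\varepsilon}\frac{d\mu(x)}{|x-y|^2},
\]
and two standard linear-growth estimates (the second being $\int_{|x-y|>\varepsilon}|x-y|^{-2}\,d\mu(x)\lesssim \varepsilon^{-1}$) give $|E^i_\varepsilon|\le C\mu(B)\le C\diam(B)$, uniformly in $\varepsilon$.

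Combining, $\tfrac{1}{3}\iiint_{S_\varepsilon}p_{1,n}\,d\mu^3 \le C\diam(B)$ with $C$ independent of $\varepsilon$. Since $\mu$ has no atoms, $S_\varepsilon$ increases as $\varepsilon\downarrow 0$ to $B^3$ minus a $\mu^3$-null set, and $p_{1,n}\ge0$, so monotone convergence finishes the proof. The main technical point is the symmetrization step, where one must be careful that the triple integrals are absolutely convergent on the truncated region $S_\varepsilon$ before relabelling coordinates; the error bound handling the ``almost diagonal'' slab $\{|y-z|\le\varepsilon\}$ is the only place where linear growth of $\mu$ is used in a nontrivial way.
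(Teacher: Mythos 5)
Your argument is correct and is essentially the Mattila--Melnikov--Verdera symmetrization proof, which is exactly what the paper invokes (it simply cites \cite[Lemma 2.1]{mmv} and notes the proof is identical for the kernels $K^i_{1,n}$, using $|K^i_{1,n}(u)|\le |u|^{-1}$). Your handling of the truncation, the splitting of the almost-diagonal slab $\{|y-z|\le\varepsilon\}$, the use of nonatomicity to pass to the limit, and the appeal to Proposition \ref{posperm}(i) for nonnegativity all match the standard argument being referenced.
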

The proof of Lemma \ref{mmvcl2} can be found in \cite[Lemma 2.1]{mmv}. There it is stated and proved for the Cauchy transform but the proof is identical in our case.
When $p_{1,n}(x,y,z)$ is replaced by the square of the Menger curvature $c(x,y,z)$, recall \eqref{meng} and \eqref{mel}, the triple integral in \eqref{permtriple} is called the curvature of $\mu$ and is denoted by $c^2(\mu)$. A famous theorem of David and L\'eger \cite{Leger}, which was also one of the cornerstones in the proof of Vitushkin's conjecture by David in \cite{david}, states that if $E \subset \Rn$ has finite length and $c^2(\ha^1 \lfloor E)< \infty$ then $E$ is rectifiable. Here we obtain the following generalization of the David-Leger Theorem. 
\begin{teo}
\label{legerperm}
Let $E \subset \Rn$ be a Borel set such that $0<\mathcal{H}^1(E)<\infty$ and 
$p_{1,n}(\mathcal{H}^1 \lfloor E)<\infty$, then the set $E$ is rectifiable.
\end{teo}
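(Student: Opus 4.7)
The plan is to reduce Theorem \ref{legerperm} to the classical David--L\'eger theorem of \cite{Leger}: a Borel set $E\subset\Rn$ with $0<\ha^1(E)<\infty$ and $c^2(\ha^1\lfloor E)<\infty$ is rectifiable. Setting $\mu=\ha^1\lfloor E$, it suffices to establish the pointwise inequality
$$c(x,y,z)^{2}\le C(d,n)\,p_{1,n}(x,y,z)$$
for all distinct $x,y,z\in\Rn$. Integrating against $\mu\otimes\mu\otimes\mu$ then gives $c^{2}(\mu)\le C(d,n)\,p_{1,n}(\mu)<\infty$, and the David--L\'eger theorem applies directly.

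The engine for the pointwise bound is Proposition \ref{posperm}(ii). For each triple $(x,y,z)$, I would first exhibit a coordinate index $j\in\{1,\ldots,d\}$, with a quantitative constant $\theta_0>0$ depending only on $d$, such that
$$\mang(V_{j},L_{x,y})+\mang(V_{j},L_{x,z})+\mang(V_{j},L_{y,z})\ge\theta_0.$$
This amounts to an elementary pigeonhole argument. Let $u_{1},u_{2},u_{3}\in S^{d-1}$ be the unit vectors along the three edges of the triangle. Then
$$\sum_{j=1}^{d}\sum_{i=1}^{3}|u_{i,j}|^{2}=\sum_{i=1}^{3}|u_{i}|^{2}=3,$$
so some $j$ satisfies $\sum_{i}|u_{i,j}|^{2}\ge 3/d$. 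Since $(\sum_{i}|u_{i,j}|)^{2}\ge\sum_{i}|u_{i,j}|^{2}$ for non-negative entries, and $\mang(V_{j},L_{x_{i},x_{k}})=\arcsin|u_{i,j}|\ge|u_{i,j}|$ on $[0,1]$, this $j$ satisfies the required angle bound with the dimensional constant $\theta_{0}=\sqrt{3/d}$.

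Applying Proposition \ref{posperm}(ii) with this choice of $j$, and using the non-negativity of each $p^{i}_{1,n}$ from part (i) of the same proposition, one obtains
$$c(x,y,z)^{2}\le C(\theta_{0})^{-1}\sum_{i\ne j}p^{i}_{1,n}(x,y,z)\le C(\theta_{0})^{-1}\,p_{1,n}(x,y,z).$$
Since $\theta_{0}$ and hence $C(\theta_{0})^{-1}$ depend only on $d$, the bound holds uniformly in the triple. Integrating against $\mu\otimes\mu\otimes\mu$ gives the desired $c^{2}(\mu)<\infty$, and the David--L\'eger theorem then forces $E$ to be rectifiable.

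The hard geometric work is already packaged inside Proposition \ref{posperm}(ii), so no serious obstacle remains at this stage; the only step that needs checking is that a uniform $\theta_{0}$ suffices, which is precisely the role of the pigeonhole above. This also explains structurally why the extension from the planar case treated in \cite{cmpt} to arbitrary ambient dimension $d$ requires essentially no new geometric input beyond the symmetrization identities for the kernels $K_{1,n}$.
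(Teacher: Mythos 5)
Your overall strategy — establishing a uniform pointwise comparison $c(x,y,z)^2\le C(d,n)\,p_{1,n}(x,y,z)$ and then invoking the David--L\'eger theorem directly — is genuinely different from the route the paper takes. The paper does not prove any such pointwise inequality; it instead re-runs the L\'eger-type construction from \cite[Sections 3--7]{cmpt}, using Proposition \ref{posperm} as a replacement for \cite[Proposition 2.1 and Lemma 2.3]{cmpt}. Your route, if it works, is noticeably shorter, so it is worth examining carefully.

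There is, however, a genuine error in the pigeonhole step: your reading of $\mang(V_j,L)$ is the opposite of the one the paper must be using. In Proposition \ref{posperm}(ii), the quantity $\mang(V_j,L)$ has to mean the angle between $L$ and the $j$-th coordinate axis, i.e. $\arccos|u_j|$ where $u$ is the unit direction of $L$, not $\arcsin|u_j|$ as you take it. One sees this from the proof of (ii) in the appendix: the hypothesis $\mang(V_j,L_{y,z})\ge\theta_0$ is used to produce a coordinate $i_0\ne j$ with $|b_{i_0}|\ge C(\theta_0)|b|$, which is only possible when the direction $b/|b|$ has a \emph{small} $j$-th component (so that the mass of $b$ lives in the remaining coordinates). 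Under your interpretation, the hypothesis would instead force $|b_j|$ to be large; in the extreme case $b$ parallel to $e_j$ one gets $b_{i_0}=0$ for every $i_0\ne j$, flatly contradicting what the proof derives. Consequently your pigeonhole, which selects $j$ so as to \emph{maximize} $\sum_i|u_{i,j}|^2$, points at exactly the wrong coordinate: it singles out the axis most aligned with the triangle, which is the coordinate you would want to \emph{keep}, not drop.

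The fix is straightforward. Select $j$ to \emph{minimize} $\sum_i|u_{i,j}|^2$: since
$\sum_{j=1}^d\sum_{i=1}^3\bigl(1-|u_{i,j}|^2\bigr)=3(d-1)$,
some $j$ satisfies $\sum_{i}\bigl(1-|u_{i,j}|^2\bigr)\ge 3(d-1)/d$, and then using $\arccos|t|\ge\sin(\arccos|t|)=\sqrt{1-t^2}$ and $\sum_i\sqrt{a_i}\ge\sqrt{\sum_i a_i}$ one obtains
$\sum_{i}\mang(V_j,L_i)=\sum_i\arccos|u_{i,j}|\ge\sqrt{3(d-1)/d}=:\theta_0(d)>0$.
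With this corrected pigeonhole, Proposition \ref{posperm}(ii) applies with a dimensional $\theta_0$, and the rest of your argument (nonnegativity from (i), integrate, apply David--L\'eger) goes through as written. So the gap is a sign error in the geometry of the pigeonhole rather than a flaw in the underlying strategy, but as stated it would lead you to apply (ii) with a coordinate $j$ for which the angular hypothesis typically fails.

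One final note: your closing remark that the extension to $d>2$ requires "essentially no new geometric input beyond the symmetrization identities" understates matters. The symmetrization identities in higher dimensions \emph{are} the new geometric input; the paper emphasizes that Proposition \ref{posperm} replaces arguments in \cite{cmpt} that only work in the plane, and that this is where the work lies.
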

\begin{remarkthep} We first note that statement (1) of Theorem \ref{recthm1} follows immediately from Lemma \ref{mmvcl2} and Theorem \ref{legerperm}. Theorem \ref{legerperm} was earlier proved in \cite{cmpt} for $d=2$. We stress that the constraint $d=2$ in \cite[Theorem 1.2 (i)]{cmpt} is essentially  used in the proofs of \cite[Proposition 2.1]{cmpt}  and \cite[Lemma 2.3]{cmpt} which only go through in the plane. Nevertheless in no other instance the arguments in \cite{cmpt} depend on the ambient space being $2$-dimensional. Proposition \ref{posperm} bypasses this issue by using completely different reasoning, and generalizes  \cite[Proposition 2.1]{cmpt}  and \cite[Lemma 2.3]{cmpt} in Euclidean spaces 
of arbitrary dimension. Furthermore it removes the assumption of the triangles with comparable sides which was also essential in the proofs of \cite[Proposition 2.1]{cmpt}  
and \cite[Lemma 2.3]{cmpt}. With Proposition \ref{posperm} at our disposal we obtain (i) by following the arguments from \cite[Sections 3-7]{cmpt} 
without any changes. In several cases in \cite[Sections 3-6]{cmpt} there are references to several components from \cite{Leger} but 
this does not create any problem, since the proof in  \cite{Leger} holds for any $\Rn$.

The proof of (ii) from Theorem \ref{recthm1} follows, as in (i), by Proposition \ref{posperm} and \cite[Section 8]{cmpt}, as the arguments there do not depend on the dimension of the ambient space.
\end{remarkthep}


\appendix 
\section{Proofs of Propositions \ref{lemmaperm} and \ref{posperm}}\label{secpermap}

For simplicity we let $n$ odd. Then for $0<\a\leq 1$
\begin{equation*}
K^i_{\a,n}(x)=\frac{x_i^{n}}{|x|^{n+\a}}, \quad x =(x_1,\dots,x_d) \in \Rn \stm \{0\}.
\end{equation*}

\begin{proof}[Proof of Proposition \ref{lemmaperm}]
Write $a=y-x$ and  $b=z-y$; then $a+b=z-x$. Without loss of generality we can assume that $|a|\leq |b|\leq|a+b|$. A simple computation yields

\begin{equation}
\label{permi}
\begin{split}
&p^i_{\a,n}(x,y,z)\\
&\quad= K^i_{\a,n}(x-y)\,K^i_{\a,n}(x-z) + K^i_{\a,n}(y-x)\,K^i_{\a,n}(y-z) + K^i_{\a,n}(z-x)\,K^i_{\a,n}(z-y)\\
&\quad= K^i_{\a,n}(-a)\,K^i_{\a,n}(-a-b) + K^i_{\a,n}(a)\,K^i_{\a,n}(-b) + K^i_{\a,n}(a+b)\,K^1_{\a,n}(b)\\
&\quad=K^i_{\a,n}(a+b)K^i_{\a,n}(a)+K^i_{\a,n}(a+b)K^i_{\a,n}(b)-K^i_{\a,n}(a)K^i_{\a,n}(b)\\
&\quad=\frac{(a_i+b_i)^na_i^n|b|^{n+\a}+(a_i+b_i)^nb_i^n|a|^{n+\a}-a_i^nb_i^n|a+b|^{n+\a}}{|a|^{n+\a}|b|^{n+\a}|a+b|^{n+\a}}.
\end{split}
\end{equation}
If $a_ib_i=0$ the proof is immediate. Take for example $a_i=0$. Then we trivially obtain 
\begin{equation}
\label{aibizero}
p^i_{\a,n}(x,y,z)=\frac{b_i^{2n}}{|b|^{n+\a}|a+b|^{n+\a}} \approx \frac{ \ M_i^{2n}}{L(x,y,z)^{2\a+2n}}.
\end{equation}

To prove the upper bound inequality in \eqref{icomparability} we distinguish two cases.\newline

{\em Case} $a_ib_i>0:$ Without loss of generality assume $a_i> 0$ and $b_i> 0$. In case $a_i<0$ and $b_i<0$,
\begin{equation*}
\begin{split}
&(a_i+b_i)^na_i^n|b|^{n+\a}+(a_i+b_i)^nb_i^n|a|^{n+\a}-a_i^nb_i^n|a+b|\\
&\quad  \quad=(|a_i|+|b_i|)^n|a_i|^n|b|^{n+\a}+(|a_i|+|b_i|)^n|b_i|^n|a|^{n+\a}-|a_i|^n|b_i|^n|a+b|^{n+\a}
\end{split}
\end{equation*}
and thus it can be reduced to the case where both coordinates are positive.

Notice that since $|a|\le |b|\le |a+b|$, $a_i\le |a|$ and $0<\a<1$,
\begin{equation*}
\begin{split}
p^i_{\a,n}(x,y,z)&= \frac{(a_i+b_i)^nb_i^n}{|b|^{n+\a}|a+b|^{n+\a}}+\frac{a_i^n\left((a_i+b_i)^n|b|^{n+\a}-b_i^n|a+b|^{n+\a}\right)}{|a|^{n+\a}|b|^{n+\a}|a+b|^{n+\a}}\\
&\le\frac{1}{|b|^{\a}|a+b|^{\a}}+\frac{(a_i+b_i)^n-b_i^n}{|a|^\a|a+b|^{n-\a}}\\
&\le \frac{1}{|b|^{\a}|a+b|^{\a}}+\frac{a_i^\a}{|a|^\a}\frac{\sum_{k=1}^n\binom{n}{k}a_i^{k-\a}b_i^{n-k}}{|b|^{n+\al}}\\
&\le \frac{1}{|b|^{\a}|a+b|^{\a}}+\frac{\sum_{k=1}^n\binom{n}{k}|a|^{k-\a}|b|^{n-k}}{|b|^{n+\al}}\\&\le \frac{1}{|b|^{\a}|a+b|^{\a}}+\frac{B(n)|b|^{n-\al}}{|b|^{n+\a}}
\le \frac{B(n,\a)}{|a+b|^{2\a}},
\end{split}
\end{equation*}
where the last inequality comes from $|a+b|\le 2|b|$, which follows from the triangle inequality and the fact that $|a|\le|b|$.\newline

{\em Case} $a_i b_i <0:$ Without loss of generality we can assume that $a_i<0$, $b_i> 0$ and $b_i\le |a_i|$, the other cases follow 
analogously by interchanging the roles of $a_i$ and $b_i$. 
\begin{equation*}
\begin{split}
p^i_{\a,n}(x,y,z)&= \frac{(|a_i|-b_i)^n|a_i|^n|b|^{n+\a}-(|a_i|-b_i)^nb_i^n|a|^{n+\a}+|a_i|^nb_i^n|a+b|^{n+\a}}{|a|^{n+\a}|b|^{n+\a}|a+b|^{n+\a}}\\
&\le\frac{(|a_i|-b_i)^n|a_i|^n|b|^{n+\a}+|a_i|^nb_i^n|a+b|^{n+\a}}{|a|^{n+\a}|b|^{n+\a}|a+b|^{n+\a}}\\
&\le\frac{|a|^{2n}(|b|^{n+\a}+|a+b|^{n+\a})}{|a|^{n+\a}|b|^{n+\a}|a+b|^{n+\a}}=|a|^{n-\a}\left(\frac1{|a+b|^{n+\a}}+\frac1{|b|^{n+\a}}\right)\\
&\le\frac{2^{2\a}+1}{|a+b|^{2\a}}.
\end{split}
\end{equation*}
since $b_1\le |a_1|$, $0<|a_1|-b_1<|a_1|<|a|$ and $|a|\le|b|\le|a+b|$.

We now prove the lower bound estimate in (\ref{icomparability}).  \newline

{\em Case}  $a_ib_i > 0:$ As explained in the proof of the upper bound inequality the proof can be reduced to the case when $a_i>0$ and $b_i>0$. 
Setting $t=|b|/|a|$ in (\ref{permi}) and noticing that $|a+b|/|a| \leq 1+t$ we get
\begin{equation}
\label{pfbound}
\pia(x,y,z) \geq \frac{a_i^n(a_i+b_i)^nt^{n+\a}-b_i^n a_i^n (1+t)^{n+\a}+b_i^n (a_i+b_i)^n }{|b|^{n+\a}|a+b|^{n+\a}}:=\frac{f_1(t)}{|b|^{n+\a}|a+b|^{n+\a}}.
\end{equation}
Then it readily follows that the unique zero of
$$f'_1(t)=a_i^n(n+\a)(t^{n+\a-1} (a_i+b_i)^n-b_i^n (1+t)^{n+\a-1})$$
is
$$t^\ast=\frac{1}{\left( \frac{a_i}{b_i}+1 \right)^{\frac{n}{n+\a-1}}-1}>0.$$
Moreover $f_1$ attains its minimum at $t^\ast$ because $f_1'(0)=-(n+\a)a_i^n b_i^n$ and 
$\lim_{t \ra \infty} f_1'(t)=\lim_{t \ra \infty} ((b_i+a_i)^n-b_i^n)t^{n+\a-1}=+\infty$.

We first consider the case when $t^\ast >1$. Then we deduce that 
\begin{equation}\label{s}
0<\frac{a_i}{b_i}<2^{\frac{n + \a-1}{n}}-1<1,
\end{equation}
 the last inequality coming from $\a<1$. 
Therefore $a_i<b_i$. Setting $s=a_i/b_i$ we obtain 
\begin{equation*}
f_1(t)
=b_i^{2n} \left(s^n(1+s)^nt^{n+\a}-s^n (1+t)^{n+\a}+(s+1)^n\right)
\end{equation*}
and it follows easily that
$$f_1(t^\ast) =b_i^{2n} (1+s)^n \left(1-\frac{s^n}{((s+1)^{\frac{n}{n+\a-1}}-1)^{n+\a-1}} \right).$$
A direct computation shows that the function 
$$g_1(s)=1-\frac{s^n}{((s+1)^{\frac{n}{n+\a-1}}-1)^{n+\a-1}}$$ is decreasing. Then, by \eqref{s}, $g_1$ attains its minimum at $s=2^{\frac{n + \a-1}{n}}-1$. Therefore 
\begin{equation}
\label{tastg1}
f_1(t) \geq f_1(t^\ast) \geq b_i^{2n} (1- (2^{\frac{n + \a-1}{n}}-1)^n):=b_i^{2n}A_1(n,\a)
\end{equation}
and since $\a<1$, $A_1(n, \a) >0.$ 

We now consider the case when $t^\ast \leq 1$ and notice that as $t \geq 1$ we have $f_1(t) \geq f_1(1)$. As before for $s=\min \{a_i, b_i\} /\max \{a_i, b_i\}$
\begin{equation}
\label{ftast2}
\begin{split}
f_1(t) \geq f_1(1)&= a_i^n (a_i+b_i)^n-a_i^n b_i^n 2^{n+\a}+b_i^n (a_i+b_i)^n \\
&=(a_i+b_i)^n (\max \{a_i,b_i\})^n \left(s^n-\left( \frac{s}{s+1} \right)^n 2^{n+\a}+1 \right) \\
&:=(a_i+b_i)^n (\max \{a_i,b_i\})^n g_2(s).
\end{split}
\end{equation}
It follows easily that the only non-zero root of
$$g_2'(s)=n s^{n-1}\left(1-\frac{2^{n+\a}}{(1+s)^{n+1}}\right)$$
is $$s^\ast = 2^{\frac{n+\a}{n+1}}-1.$$ Since $\a \in (0,1)$, then $2^{\frac{n+\a-1}{n}}-1<s^\ast<1.$ Furthermore notice that 
$g_2'(2^{\frac{n+\a-1}{n}}-1)<0$ and $g_2'(1)>0$. Hence $g_2$ attains its minimum at $s^\ast$. Therefore
\begin{equation}\label{tsmall1}
\begin{split}
g_2(s)\geq g_2(s^\ast)&=(2^{\frac{n+\a}{n+1}}-1)^n \left( 1-\frac{2^{n+\a}}{2^{\frac{(n+\a)n}{n+1}}} \right)+1 \\
&=1-(2^{\frac{n+\a}{n+1}}-1)^{n+1}:=A_2(n,\a)>0,
\end{split}
\end{equation}
the positivity of the constant $A_2(n,\a)$ coming from inequality $\a<1$. Therefore (\ref{pfbound}) together with \eqref{tastg1}, (\ref{ftast2}) and \eqref{tsmall1} imply that
\begin{equation}
\label{aibipos2}
\begin{split}
\pia (x,y,z) \geq A(n,\a) \frac{M_i^{2n}}{L(x,y,z)^{2n+2a}},
\end{split}
\end{equation}
for some positive constant $A(n,\a)$. Hence we have finished the proof when $a_i \, b_i > 0$.

{\em Case} $a_i \, b_i < 0:$ 
Setting  $t=|b| /|a|$ and using (\ref{pfbound}) we get that
\begin{equation}
\label{secondf1}
\begin{split}
 \pia (x,y,z)&=\frac{1}{|b|^{n+\a}|a+b|^{n+\a}}(a_i^n(a_i+b_i)^nt^{n+\a}-b_i^n a_i^n (1+t)^{n+\a}+b_i^n (a_i+b_i)^n ) \\
 & \geq \frac{1}{|b|^{n+\a}|a+b|^{n+\a}}(a_i^n(a_i+b_i)^nt^{n+\a}-b_i^n a_i^n t^{n+\a}+b_i^n (a_i+b_i)^n )\\
&:= \frac{f_2(t)}{|b|^{n+\a}|a+b|^{n+\a}}.
\end{split}
\end{equation}
Notice that $f_2$ is an increasing function because $a_i^2+a_ib_i>a_ib_i$ and $n$ is odd:
\begin{equation*}
\begin{split}f'_2(t)&=(n+\a)t^{n+\a-1}(a_i^n(a_i+b_i)^n-a_i^nb_i^n)=(n+\a)t^{n+\a-1}((a_i^2+a_ib_i)^n-a_i^nb_i^n) > 0.
\end{split}
\end{equation*}
Therefore since $t \geq 1$ we have that 
$$f_2(t) \geq f_2(1)=(a_i^n+b_i^n)(a_i+b_i)^n-b_i^n a_i^n.$$
We assume that $|b_i| \geq |a_i|$, the case where $|b_i| <|a_i|$ can be treated in the exact same manner. We first consider the case where $a_i>0$ and $b_i<0$.
Let
$$h(r)=(r-|b_i|)^n(r^n-|b_i|^n)+r^n|b_i|^n.$$
Then 
$$h'(r)=n\left((r-|b_i|)^{n-1}(r^n-|b_i|^n)+r^{n-1}((r-|b_i|)^n+|b_i|^n)\right).$$
Notice that $$h'(|b_i|/2)=0.$$ Furthermore $$h'(r)>0 \mbox{ for }|b_i|/2 <r\leq |b_i|.$$ To see this notice that since $0<|b_i|-r<r$, then
$(|b_i|-r)^{n-1} <r^{n-1}$.Therefore since $r^n-|b_i|^n<0$
\begin{equation*}
\begin{split}
h'(r)>n(r^{n-1}(r^n-|b_i|^n)+r^{n-1}((r-|b_i|)^n+|b_i|^n))=nr^{n-1}(r^n-(|b_i|-r)^n)>0.
\end{split}
\end{equation*}
With an identical argument one sees that $h'(r)<0$ for $0<r \leq |b_i|/2$. Hence it follows that, for $0<r\leq |b_i|$,
$$h(r) \geq h(|b_i|/2) \geq \frac{|b_i|^{2n}}{2^n}.$$
Since $a_i \in (0, |b_i|]$ we get that $f_2(1) \geq \frac{|b_i|^{2n}}{2^n}$ and by (\ref{secondf1})
\begin{equation}
\label{aibineg1}
 \pia (x,y,z) \geq 2^{-n} \frac{b_i^{2n}}{|b|^{n+\a}|a+b|^{n+\a}} \geq A_3(n) \frac{M_i^{2n}}{L(x,y,z)^{2n+2\a}}.
\end{equation}

The case where $a_i <0$ and $b_i>0$ is very similar. In this case instead of the function $h$ we consider the function
$l(r)=(r+b_i)^n(r^n+b_i^n)-r^nb_i^n$
for $-|b_i|/2\leq x <0$ and we show that in that range, 
$$l(r) \geq l(-|b_i|/2) \geq b_i^{2n}/2^n.$$
Therefore as $a_i \in [-|b_i|,0)$, $f_2(1) \geq \frac{|b_i|^{2n}}{2^n}$ and we obtain from \eqref{secondf1}
\begin{equation}
\label{aibineg2}
 \pia (x,y,z) \geq 2^{-n} \frac{b_i^{2n}}{|b|^{n+\a}|a+b|^{n+\a}} \geq A_3(n) \frac{M_i^{2n}}{L(x,y,z)^{2n+2\a}}.
\end{equation}
Therefore the proof of the lower bound follows by (\ref{aibipos2}), (\ref{aibineg1}) and (\ref{aibineg2}).

\end{proof}

\begin{remark1}
\label{aibinegp1}
Notice that in the proof of the lower bound inequality when $a_ib_i <0$, we do not make use of the fact that $\a<1$. Therefore (\ref{aibineg1}) and (\ref{aibineg2}) remain valid in the case where $\a=1$.
\end{remark1}

\begin{proof}[Proof of Proposition \ref{posperm}] For simplicity we let $p^i_{1,n}:=p^i_n$ for $i=1,\dots,d$. Let $a=y-x, b=z-y$ then $a+b=z-x$ and without loss of generality 
we can assume that $|a|\leq |b|\leq|a+b|=1$. In case $x_i=y_i=z_i$, then trivially by (\ref{permi}), $p^i_n(x,y,z)=0$. Hence we can assume that 
$a_i \neq 0$ or $b_i \neq 0$ and, by (\ref{permi}), for $\a=1$ , assuming without loss of generality that $b_i \neq 0$, we get 
\begin{equation}
\begin{split}
\label{perm1}
p^i_{n}(x,y,z)= \frac{(a_i+b_i)^n b_i^n \left(\left(\frac{a_i}{b_i} \right)^n |b|^{n+1}+|a|^{n+1}-\frac{a_i^n}{(a_i+b_i)^n}\right)}{|a|^{n+1}|b|^{n+1}}.
\end{split}
\end{equation}

If the points $x,y,z$ are collinear then the initial assumption $|a|\leq|b|\leq|a+b|$ implies that $|a|+|b|=|a+b|$. Furthermore $b=\lambda a$ for some $\l\neq 0$. 
We provide the details in the case when $\lambda >0$ as the remaining case is identical. We have by (\ref{perm1})
\begin{equation*}
\begin{split}
p^i_{n}(x,y,z)&=\frac{(a_i+\lambda a_i)^n \lambda^n a_i^n \left(\left(\frac{1}{\lambda} \right)^n \lambda^{n+1} |a|^{n+1}+|a|^{n+1}-\left(\frac{1}{1+\l}\right)^n\right)}{|a|^{n+1}|b|^{n+1}} \\
&=\frac{a_i^{2n} \lambda^n \left(\left( (1+\l)|a| \right)^{n+1}-1\right)}{|a|^{n+1}|b|^{n+1}}\\
&=\frac{a_i^{2n} \lambda^n }{|a|^{n+1}|b|^{n+1}}\left((1+\l)|a|-1\right)\sum_{j=0}^{n}((1+\l)|a|)^j=0
\end{split}
\end{equation*}
because $(1+\l)|a|-1=|a|+|b|-1=0$. 

We will now turn our attention to the case when the points $x,y,z$ are not collinear. We will consider several cases. \newline

\textit{Case} $a_ib_i>0.$ As in the proof of Proposition \ref{icomparability} we only have to consider the case when $a_i,b_i>0$. We first consider the subcase $0<|a|\leq |b|<|a+b|=1$.

Setting $w=a_i/b_i$ in \eqref{perm1} we get
\begin{equation*}
p^i_{n}(x,y,z)=\frac{(a_i+b_i)^n b_i^n}{|a|^{n+1}|b|^{n+1}}  f(w)
\end{equation*}
with $$f(w)=w^n|b|^{n+1}+|a|^{n+1}-\left(1+\frac{1}{w}\right)^{-n}.$$
Notice that the only non-vanishing admissible root of the equation
$$f'(w)=nw^{n-1}\left(|b|^{n+1}-\left(\frac{1}{w+1}\right)^{n+1} \right)=0$$
is $w=|b|^{-1}-1$. Furthermore it follows easily that 
$$\lim_{w\ra 0^+}f(w)=|a|^{n+1}>0 \quad \text{and}\quad \lim_{w \ra +\infty}f(w)=+\infty$$
hence $f:(0,\infty) \ra \R$ attains its minimum at $|b|^{-1}-1$. After a direct computation we get that
$$f(|b|^{-1}-1)=|a|^{n+1}-(1-|b|)^{n+1}.$$
We can now write,
\begin{equation*}
\begin{split}
|a|^{n+1}&-(1-|b|)^{n+1}=|a|^{n+1}\left(1-\left( \frac{1-|b|}{|a|}\right)^{n+1} \right)\\
&=|a|^{n+1}\left(1- \frac{1-|b|}{|a|}\right) \sum_{j=0}^n \left(\frac{1-|b|}{|a|} \right)^j=|a|^n(|a|-1+|b|) \sum_{j=0}^n \left(\frac{1-|b|}{|a|} \right)^j. \\
\end{split}
\end{equation*}
Therefore 
\begin{equation}
\label{estal1}p^i_{n}(x,y,z)\geq \frac{(a_i+b_i)^n b_i^n}{|a|^{n+1}|b|^{n+1}}|a|^n (|a|-1+|b|)\sum_{j=0}^n \left(\frac{1-|b|}{|a|} \right)^j.
\end{equation}

Recall that, by Heron's formula, the area of the triangle determined by $x,y,z \in \R^d$ is given by 
$$\text{area}(T_{x,y,z})=\frac{1}{2} \sqrt{|a+b|^2|a|^2-\left(\frac{|a+b|^2+|a|^2-|b|^2}{2}\right)^2},$$
where $a=y-x, b=z-y$ and $a+b=z-x$.
Hence
$$16 \, \text{area}(T_{x,y,z})^2=(2|a+b||a|-(|a+b|^2+|a|^2-|b|^2))(2|a+b||a|+|a+b|^2+|a|^2-|b|^2).$$
Plugging this identity into Menger's curvature formula we get
\begin{equation*}
\begin{split}
c^2(x,y,z)&=\frac{16 \, \text{area}(T_{x,y,z})^2}{|a|^2|b|^2|a+b|^2}\\
&=\frac{(2|a+b||b|-|a+b|^2-|a|^2+|b|^2)(2|a+b||b|+|a+b|^2+|a|^2-|b|^2)}{|a|^2|b|^2|a+b|^2}\\
&=\frac{(|b|^2-(|a+b|-|a|)^2)((|a|+|a+b|)^2-|b|^2)}{|a|^2|b|^2|a+b|^2}\\
&=\frac{(|b|-|a+b|+|a|)(|b|+|a+b|-|a|)(|a|+|a+b|-|b|)(|b|+|a+b|+|a|)}{|a|^2|b|^2|a+b|^2}
\end{split}
\end{equation*}
and since we are assuming $|a+b|=1$,
\begin{equation}
\label{herrocurv}
c^2(x,y,z)=\frac{(|b|+|a|-1)(|b|+1-|a|)(|a|+1-|b|)(|b|+1+|a|)}{|a|^2|b|^2}.
\end{equation}
By (\ref{estal1}) and (\ref{herrocurv}) we get that
\begin{equation*}
\begin{split} 
p^i_{n}(x,y,z)&\geq \frac{(a_i+b_i)^n b_i^n}{|a|^{n+1}|b|^{n+1}}\frac{|a|^n|a|^2|b|^2}{(|b|+1-|a|)(|a|+1-|b|)(|b|+1+|a|)}
\sum_{j=0}^n \left(\frac{1-|b|}{|a|} \right)^j c^2(x,y,z)\\
&\ge\frac{b_i^{2n}|a||b|}{|b|^n(|b|+1-|a|)(|a|+1-|b|)(|b|+1+|a|)}c^2(x,y,z),
\end{split}
\end{equation*}
the last inequality coming from $a_i+b_i\ge b_i$ and the fact that the sum above is greater than one. Using the triangle inequality, $1=|a+b|\le|a|+|b|$, and the fact 
that $1=|a+b|\le 2|b|$, we obtain
\begin{equation*}
 p^i_{n}(x,y,z)\ge \frac{b_i^{2n}}{12|b|^n}c^2(x,y,z)\ge c(n)\frac{ b_i^{2n}}{|b|^{2n}}c^2(x,y,z).
\end{equation*}

To complete the proof in case $a_ib_i>0$, we are left with the situation $|b|=|a+b|=1$.  By (\ref{perm1})
\begin{equation*}
p^i_{n}(x,y,z)=\frac{(a_i+b_i)^n b_i^n \left(\left(\frac{a_i}{b_i} \right)^n +|a|^{n+\a}-\left(\frac{a_i}{a_i+b_i}\right)^n\right)}{|a|^{n+\a}}\geq (a_i+b_i)^n b_i^n \geq b_i^{2n},
\end{equation*}
because $\frac{a_i}{b_i}>\frac{a_i}{a_i+b_i}$ and thus $\left(\frac{a_i}{b_i} \right)^n>\left(\frac{a_i}{a_i+b_i}\right)^n.$
Hence
$$p^i_{n}(x,y,z) \geq \frac{b_i^{2n}}{|b|^{2n}}|b|^{-2} \gtrsim \frac{b_i^{2n}}{|b|^{2n}} c^2(x,y,z).$$

\textit{Case} $a_i b_i<0$. It follows from Remark \ref{aibinegp1} (see \eqref{aibineg2} with $\a=1$).

\textit{Case}  $a_i \,b_i=0$. Since we have assumed that $b\neq 0$ we have that $a_i=0$ and by \eqref{aibizero}, with $\a=1$, we are done.

Therefore we have shown that whenever $x,y,z$ are not collinear and they do not lie in the hyperplane $x_i=y_i=z_i$, then $p^i_{n}(x,y,z)>0$. This finishes the proof of (i).
Furthermore, we have shown that if this is the case, then 
\begin{equation}
\label{p11}
p^i_{n}(x,y,z) \geq C(n) \frac{ b_i^{2n}}{|b|^{2n}}c^2(x,y,z).
\end{equation}
\newline
For the proof of (ii) notice that since $\mang (V_j, L_{y,z}) \geq \theta_0$ there exists some coordinate $i_0 \neq j$ such that 
$$|b_{i_0}|=|y_{i_0}-z_{i_0}|\geq C(\theta_0)|y-z|=C(\theta_0)|b|,$$ 
hence (ii) follows by (\ref{p11}).
\end{proof}

\textbf{Acknowledgement}. We would like to thank Joan Mateu and Xavier Tolsa for valuable conversations during the preparation of this paper.


\begin{thebibliography}{CMPT1}

\bibitem[Ca]{ca} {\sc A. C. Calder\'on}, {\em Cauchy integrals on Lipschitz curves and related operators.} Proc. Nat. Acad. Sci. U.S.A. 74 (1977), no. 4, 1324--1327.

\bibitem[AH]{adamshedberg}
{\sc D. R. Adams and L. I. Hedberg},
{\em Function Spaces and Potential Theory},
Grundl. Math. Wiss. {\bf 314}, Springer-Verlag, Berlin 1996.

\bibitem[A]{Ahlfors}
{\sc L. Ahlfors},
{\em Bounded analytic functions,}
Duke Math. J. {\bf 14} (1947), 1--11.

\bibitem[ACrL]{acl}
{\sc N. Aronszajn, T. Creese and L. Lipkin}, 
{\em Polyharmonic functions,} 
Oxford  Mathematical Monographs, Oxford University Press, New York, (1983). 

\bibitem[CMPT1]{cmpt}
{\sc V. Chousionis, J. Mateu, L. Prat and X. Tolsa},
{\em Calder\'on-Zygmund kernels and rectifiability in the plane},
 Adv. Math. 231 (2012), no. 1, 535--568.

\bibitem[CMPT2]{cmpt2}
{\sc V. Chousionis, J. Mateu, L. Prat and X. Tolsa},
{\em Capacities associated with Calder\'on-Zygmund kernels},
 Potential Anal. 38 (2013), no. 3, 913--949.

\bibitem[D]{david}
{\sc G. David},
{\em Unrectifiable 1-sets have vanishing analytic capacity},
Rev. Mat. Iberoamericana {\bf 14} (1998), no. 2, 369-479.

\bibitem[DS1]{DS1}{\sc G. David and S. Semmes}, {\em Singular Integrals and rectifiable sets in $\mathbb {R}
^n$: Au-del\`{a} des graphes lipschitziens.} Ast\'erisque
193, Soci\'{e}t\'{e} Math\'{e}matique de France (1991).

\bibitem[DS2]{DS2} {\sc G. David and S. Semmes}, {\em Analysis of and on uniformly
rectifiable sets.} Mathematical Surveys and Monographs, 38. American
Mathematical Society, Providence, RI, (1993).

\bibitem[D{\O}]{do}
{\sc A. M. Davie and B. {\O}ksendal}, 
{\em Analytic capacity and differentiability properties of finely harmonic functions}, 
Acta Math. 149 (1982), 127--152.

\bibitem[Du]{duandikoetxea}
{\sc J. Duoandikoetxea},
{\em Fourier Analysis},
American Mathematical Society, 2001.

\bibitem[ENV]{env}
{\sc V. Eiderman, F. Nazarov and A. Volberg},
{\em Vector-valued Riesz potentials: Cartan-type estimates
and related capacities}
Proc. London Math. Soc. (2010), 1--32.


\bibitem[GV]{gv}
{\sc J. Garnett and J. Verdera},
{\em Analytic capacity, bilipschitz maps and Cantor sets}, Math. Res. Lett. 10 (2003), no. 4, 515--522.

\bibitem[GPT]{gpt}
{\sc J. Garnett, L. Prat and X. Tolsa},
{\em Lipschitz harmonic capacity and bilipschitz images of Cantor sets. }Math. Res. Lett. 13 (2006), no. 5-6, 865--884.

\bibitem[L\'e]{Leger} {\sc J. C. L\'eger}, {\em Menger curvature and
rectifiability.} Ann. of Math. 149 (1999), 831--869.

\bibitem[LZ]{lz}
{\sc R. Lyons and K. Zumbrun}, 
{\em Homogeneous partial derivatives of radial functions,}
Proc. Amer. Math. Soc. 121(1) (1994), 315--316. 

\bibitem[MMV]{MMV}{\sc P. Mattila, M. Melnikov and J. Verdera}, {\em The Cauchy integral, analytic capacity, and uniform rectifiability.} Ann. of Math. (2)  144  (1996),  no. 1, 127--136.

\bibitem[MP]{mp}
{\sc P. Mattila and P.V. Paramonov},
{\em On geometric properties of harmonic $\mbox{Lip}_1$-capacity},
Pacific J. Math. {\bf 171} (1995), no.2, 469--491.

\bibitem[MPV]{mpv}
{\sc J. Mateu, L. Prat and J. Verdera},
{\em The capacity associated to signed Riesz kernels, and Wolff potentials},
J. reine angew. Math. {\bf 578} (2005), 201--223.

\bibitem[MPV2]{mpv2}
{\sc J. Mateu, L. Prat and J. Verdera},
{\em Potential theory of scalar Riesz kernels},
Indiana Univ. Math. J. 60 (2011), no. 4, 1319--1361.

\bibitem[MMV]{mmv}
{\sc P. Mattila, M. Melnikov and J. Verdera.} 
{\em The Cauchy integral, analytic capacity, and uniform rectifiability.} Ann. of Math. (2)  144  (1996),  no. 1, 127--136.


\bibitem[MzS]{mazya}
{\sc V. G. Maz'ya and T. O. Shaposhnikova},
{\em Theory of Multipliers in spaces of differentiable functions,}
Monographs and studies in Mathematics 23. Pitman (Advanced Publishing Program), Boston, MA, 1985.

\bibitem[Me]{Me}
{\sc M. S. Melnikov},
{\em Analytic capacity: discrete approach and curvature of measure,}
Sbornik: Mathematics {\bf 186} (1995), no.~6, 827--846.

\bibitem[MeV]{MeV}
{\sc M. S. Melnikov and J. Verdera},
{\em A geometric proof of the $L^2$ boundedness of the Cauchy integral on Lipschitz graphs,}
Inter. Math. Res. Not. {\bf 7} (1995), 325--331.

\bibitem[NToV]{NToV} {\sc F. Nazarov, X. Tolsa and A. Volberg},  {\em On the uniform rectifiability of AD-regular measures with bounded Riesz transform operator: the case of codimension 1.} to appear in Acta Math.

\bibitem[P1]{imrn}
{\sc L. Prat},
{\em Potential theory of signed Riesz kernels: capacity and Hausdorff measure},  Int. Math. Res. Not. 2004, no. 19, 937--981.

\bibitem[P2]{illinois}
{\sc L. Prat}, Null sets for the capacity associated to Riesz kernels. Illinois J. Math. 48 (2004), no. 3, 953--963.
{\em }

\bibitem[P3]{tams}
{\sc L. Prat},
{\em On the semiadditivity of the capacities associated with signed vector valued Riesz kernels},
to appear in Trans. Amer. Math. Soc.

\bibitem[St]{stein}
{\sc E. M. Stein},
{\em Singular Integrals and differentiability properties of functions,}
Princeton University Press, Princeton, 1970.

\bibitem[T1]{tolsaindiana}
{\sc X. Tolsa},
{\em On the analytic capacity $\gamma_+$,}
Indiana Univ. Math. J. 51(2) (2002), 317--344.

\bibitem[T2]{semiad}
{\sc X. Tolsa},
{\em Painlev\'e's problem and the semiadditivity of analytic capacity,}
Acta Math. {\bf 190} (2003), no.~1, 105--149.

\bibitem[T3]{semiad2}
{\sc X. Tolsa},
{\em The semiadditivity of continuous analytic capacity and the inner boundary conjecture,}
Amer. J. Math. {\bf 126} (2004), 523--567.

\bibitem[T4]{bilipschitz}
{\sc X. Tolsa},
{\em Bilipschitz maps, analytic capacity, and the Cauchy integral,} Ann. of Math. (2) 162 (2005), no. 3, 1243--1304.

\end{thebibliography}
\end{document}